\numberwithin{equation}{section}
\theoremstyle{plain}
\newtheorem{thm}{Theorem}[section]
\newtheorem{lem}[thm]{Lemma}
\newtheorem{defn}[thm]{Definition}
\newtheorem{rmkk}[thm]{Remark}
\newtheorem{hypp}[thm]{Hypotheses}
\newcommand{\enter}{\bigskip}
\date{November 19, 2017}
\begin{document}
 \author{{Prasanta Kumar Barik and
 Ankik Kumar Giri}\vspace{.2cm}\footnote{Corresponding author. Tel +91-1332-284818 (O);  Fax: +91-1332-273560  \newline{\it{${}$ \hspace{.3cm} Email address: }}ankikgiri.fma@iitr.ac.in/ankik.math@gmail.com}\\
\footnotesize \small{ \textit{Department of Mathematics, Indian Institute of Technology Roorkee,  Roorkee-247667, Uttarakhand,}}\\ \small{\textit{India}}
  }

\title{Well-posedness to the continuous coagulation processes with collision-induced multiple fragmentation }

\maketitle

\hrule \vskip 11pt

\begin{quote}
{\small {\em\bf Abstract.} An existence result on weak solutions to the continuous coagulation equation with collision-induced multiple fragmentation is established for certain classes of unbounded coagulation, collision and breakup kernels.  In this model, a pair of particles can coagulate into a larger one if their confrontation is a complete inelastic collision; otherwise, one of them will split into many smaller particles due to a destructive collision. In the present work, both coagulation and fragmentation processes are considered to be intrinsically nonlinear. The breakup kernel may have a possibility to attain a singularity at the origin. The proof is based on the classical weak $L^1$ compactness method applied to suitably chosen approximating equations. In addition, we study the uniqueness of weak solutions under additional growth conditions on collision and breakup kernels which mainly relies on the integrability of higher moments. Finally, it is obtained that the unique weak solution is mass-conserving.\enter
}
\end{quote}
\noindent
{\bf Keywords:} Coagulation; Collision-induced multiple fragmentation; Existence; Weak compactness; Uniqueness; Strong nonlinear fragmentation;
Integrability of higher moments.\\
{\rm \bf MSC (2010).} Primary: 45K05, 45G99, Secondary: 34K30.\\

\vskip 11pt \hrule

\section{Introduction}
Coagulation is a kinetic process in which two particles combine to form a bigger particle whereas in fragmentation process a bigger particle splits into small fragments. In general, coagulation event is always a nonlinear process. However, the fragmentation process may be classified into two major categories on the basis of fragmentation behaviour of particles, one of them is \emph{linear fragmentation} and another one is \emph{nonlinear fragmentation}. The linear fragmentation may occur due to external forces or spontaneously (that depends on the nature of particles). However, if the fragmentation behaviour does not depend only on its nature and external agents but also depends on the state and properties of the entire system, in such a situation nonlinear fragmentation occurs. The simplest case of nonlinear fragmentation takes place due to the collision between two particles. Therefore, it is also known as \emph{collision-induced fragmentation}. Linear fragmentation equations are widely studied by many mathematicians using various techniques, see \cite{Dubovskii:1996, Giri:2013, Giri:2010A, Giri:2012, Giri:2011, McLaughlin:1997, Stewart:1989, Stewart:1990}. However, nonlinear fragmentation equations or collision-induced multiple fragmentation equations did not get proper attention in mathematical community. The nonlinear fragmentation equation is quite difficult to handle mathematically. Therefore, in this paper, a new mathematical model on the continuous coagulation equation with collision-induced multiple fragmentation is studied. A discrete version of the coagulation equation with collision-induced binary fragmentation has been already studied by Jianhang et al. \cite{Jianhong:2008}. Such models arise in polymer science, astrophysics and raindrop breakup etc.  In this article, we assume that two particles can coalescence to form a lager particles, when they meet. Meanwhile, if a pair of particles come across and destructively collide with each other, fragmentation of particles occurs.\\

  Hence, the continuous coagulation equation with collision-induced multiple fragmentation for the change in concentration of the particle $g=g(z,t)$ of volume $z \in \mathbb{R}_{+}:=(0, \infty)$ and at time $t \in[0, \infty)$ is given by
\begin{align}\label{cfe}
\frac{\partial g(z,t)}{\partial t}  =& \frac{1}{2}\int_0^z K(z-z_1, z_1)g(z-z_1, t)g(z_1, t)dz_1- \int_{0}^{\infty} K(z,z_1)g(z,t)g(z_1,t)dz_1\nonumber\\
 &+ \int_{z}^{\infty} \int_{0}^{\infty}B(z|z_1;z_2)C(z_1,z_2)g(z_1,t)g(z_2,t)dz_2dz_1\nonumber\\
 &-\int_{0}^{\infty} C(z,z_1)g(z,t)g(z_1,t)dz_1.
\end{align}
In this article, we show the existence and uniqueness of weak solutions to the above nonlinear integro-partial differential equation (\ref{cfe})
with the following initial condition:
\begin{align}\label{in1}
g(z,0) = g_{0}(z)\geq 0~ \mbox{a.e.}
\end{align}
Here, $K(z,z_1)$ denotes the coagulation kernel, which describes the rate at which  particles of volumes $z$ and $z_1$ meet to form bigger particles of volume $z+z_1$, which is symmetric i.e. $K(z, z_1)=K(z_1 ,z),  \forall  z, z_1 \in \mathbb{R}_{+}$ in nature and the collision kernel $C(z, z_1)$ represents the rate of interaction between two particles $z$ and $z_1$, which is also symmetric. The collision kernel is called homogeneous with an index of homogeneity $\lambda$, if $C(az,az_1)=a^{\lambda}C(z,z_1)$. A detailed study on collision kernel is described in \cite{Matthieu:2007, Kostoglou:2000}.\\

The breakup or breakage kernel $B(z|z_1;z_2)$ is a conditional probability function for the formation of particles of volume $z$ resulting from the breakup of particles of volume $z_1$ due to their collision with  particles of volume $z_2$. The breakup kernel is similar to  the breakage function considered in the linear fragmentation equation. It also attains the similar property as the breakage function.\\

The first and the second integrals on the right-hand side of (\ref{cfe}) represent the formation and disappearance of particles of volume $z$ respectively due to coagulation events. On the other hand, the third integral represents the birth of particles of volumes $z$ due to collision between pair of particles of volumes $z_1$ and $z_2$, in which the mass of particles undergoing breakage is larger than $z$ and the last integral describes the death of particles of volume $z$ due to collision between particles of volume $z$ and remaining particles of the system.\\

The breakup kernel has following properties:\\
$(i)$ The total number of particles resulting from the breakage of a single particle of volume $z_1$ after its collision with a particle of volume $z_2$ is given by
\begin{align}\label{N1}
\int_{0}^{z_1}B(z|z_1;z_2)dz = \zeta(z_1),\ \text{for all}\  z_1>0,\  B(z|z_1;z_2)=0\  \text{for}\  z> z_1,
\end{align}
 where $\zeta(z_1)$ represents the number of fragments obtained from the breakage of particles of volume $z_1 \in \mathbb{R}_{+}$, after collision with particles of volume $z_2$. Additionally, it is assumed that $\sup_{z_1 \in \mathbb{R}_{+}} \zeta(z_1) = N < \infty$, where $N \geq 2$.\\
$(ii)$ A necessary condition for mass conservation during collision-induced multiple fragmentation events is
\begin{align}\label{mass1}
\int_{0}^{z_1}zB(z|z_1;z_2)dz = z_1,\ \ \text{for all}\ \ z_1>0.
\end{align}
 From the condition (\ref{mass1}), the total volume $z_1$ of particles is conserved during the breakage of a particle of volume $z_1$ due to its collision with a particle of volume $z_2$. A more detailed study of the properties of the breakage function can be found in \cite{Kostoglou:2000}.\\

Moreover, it is important to define moments of concentration $g$. Let $M_r$ denotes the $r^{th}$ moment of the concentration $g(z,t)$, which is defined as
\begin{align*}
M_r(t)=M_r(g(z,t)) := \int_0^{\infty} z^r g(z,t)dz,\ \ \text{ where}\ \ r \geq 0.
\end{align*}
The zeroth and first moments represent the total number of particles and total mass of particles respectively. In collision-induced multiple fragmentation events, the total number of particles increases while in coagulation events, the total number of particles decreases. In addition, it is expected that the total mass of the system remains constant during these events. However, sometimes the mass conserving property breaks down due to high growth of kernels. Hence, either\emph{ gelation} or \emph{shattering transition} may occur in the system.\\

The present work mainly deals with the existence and uniqueness of  weak solutions to continuous coagulation equation with collision-induced multiple fragmentation, (\ref{cfe})--(\ref{in1}). At the end, it is also observed that the unique solution satisfies the mass conservation property. There are several mathematical results available on the existence and uniqueness of solutions to coagulation-fragmentation equations which are obtained by applying various techniques under different growth conditions on coagulation and fragmentation kernels, see \cite{Barik:2017, DaCosta:1995, Dubovskii:1996, Giri:2010A, Giri:2011, McLaughlin:1997, Stewart:1989, Stewart:1990}. However, best to our knowledge, there is no research article available dealing with the continuous coagulation equation with collision-induced multiple fragmentation. Nevertheless, in \cite{Jianhong:2008}, a discrete coagulation process with nonlinear binary fragmentation is considered, where authors have discussed an analytical solution to discrete coagulation equation with binary collision-induced fragmentation for constant coagulation kernel and volume dependent fragmentation kernel. In addition, there are a few articles, in which analytical solutions to nonlinear fragmentation equation have been investigated only for specific collision and breakup kernels, see \cite{Cheng:1990, Cheng:1988, Matthieu:2007, Kostoglou:2000}. In 1988, the nonlinear fragmentation model was first introduced by Cheng and Redner \cite{Cheng:1988}. In \cite{Cheng:1988}, authors have discussed the scaling form of cluster size distribution and asymptotic behaviour of solutions to the continuous nonlinear fragmentation equation. Moreover, they have shown the basic difference of the scaling solutions to both linear and nonlinear fragmentation equations by taking some specific homogeneous collision kernel such as $C(az,az_1)=a^{\lambda}C(z, z_1)$ and the homogeneous breakup kernel such as $B(az|az_1;az_2)=a^{-1}B(z|z_1;z_2)$. In 1990, Cheng and  Redner \cite{Cheng:1990} have proposed a specific class of splitting model for the nonlinear fragmentation equation in which a pair of particles collide to each other. As a result of this collision, both particles are splitting in different ways: $(i)$ in exactly two, $(ii)$ only the large one is splitting or $(iii)$ only the smaller one is splitting. They have also derived asymptotic behaviour of the scaling solution by using homogeneous collision kernel $C(z, z_1)=(zz_1)^{\frac{\lambda}{2}}$ and breakup kernel in different splitting model for nonlinear fragmentation equation. Later, Kostoglou and Karabelas \cite{Kostoglou:2000}, have discussed an analytical and asymptotic information of solution to the nonlinear fragmentation equation. They have considered different simple homogeneous collision and breakup kernels to transform the nonlinear fragmentation equation into linear one for discussing the self-similar solutions. Recently, Ernst and Pagonabarra \cite{Matthieu:2007} have inquired some more details about the scaling solutions and occurrence of shattering transition for different breakage models such as symmetric breakage, L-breakage and S-breakage of nonlinear fragmentation equation. Here symmetric breakage, L-breakage and S-breakage denote respectively, the splitting of both particles into exactly two pieces, splitting of the large particle only and the smaller particle only, see \cite{Cheng:1990}. In \cite{Laurencot:2001, Safronov:1972}, authors have discussed the coagulation and collisional breakage equation. In particular, in \cite{Laurencot:2001}, authors have solved the discrete nonlinear fragmentation equation mathematically by using weak $L^1$ compactness method. However, it is quite delicate to handle mathematically the continuous nonlinear fragmentation equation because small sized particles are fragmented into very small sized to form an infinite number of clusters in a finite time. In order to overcome this problem, we consider a fully nonlinear continuous coagulation-fragmentation model which is known as the \emph{continuous coagulation model with collision-induced multiple fragmentation} (\ref{cfe}).\\

 Best to our knowledge, this is the first attempt to show the existence and uniqueness of weak solutions to the continuous coagulation equations with collision-induced multiple fragmentation, (\ref{cfe})--(\ref{in1}) for large classes of unbounded coagulation, collision and breakup kernels.\\

The paper is arranged as follows: In Section 2, we state some definitions, hypotheses and lemmas, which are essentially required for upcoming  results in subsequent sections. In Section 3, we show the existence of weak solutions to continuous coagulation equations with collision-induced multiple fragmentation (\ref{cfe})--(\ref{in1}) by using a weak $L^1$ compactness method, which has been widely discussed for coagulation equation with linear fragmentation, see \cite{Giri:2010A, Giri:2012, McLaughlin:1997, Stewart:1989}. In Section 4, the uniqueness of weak solutions to (\ref{cfe})--(\ref{in1}) is shown which is motivated by Giri \cite{Giri:2013} and  Escobedo et al. \cite{Escobedo:2003}. The proof relies on the integrability of higher moments. In addition, the mass conservation property of the unique solution is also studied in this section.

\section{Some definitions and results}
In order to prove the existence and uniqueness of weak solutions to (\ref{cfe})--(\ref{in1}),
 define the following Banach space $S^+$ as
\begin{align*}
 S^+:=\{  g \in L^1(\mathbb{R}_{+}, dz): \|g\|_{L^1(\mathbb{R}_{+}, (1+z)dz)} < \infty\ \text{and}\ g \geq 0\ \text{a.e.} \},
 \end{align*}
where
\begin{align*}
\|g\|_{L^1(\mathbb{R}_{+}, (1+z)dz)}:=\int_{0}^{\infty}(1+z)|g(z)|dz,
\end{align*}
In another way, we also define the norms
\begin{align*}
\|g\|_{L^1(\mathbb{R}_{+}, zdz)}:=\int_{0}^{\infty}z|g(z)|dz
\end{align*}
and
\begin{align*}
\|g\|_{L^1(\mathbb{R}_{+}, dz)}:=\int_{0}^{\infty}|g(z)|dz,\ \text{where}\ g\in S^{+}.
\end{align*}

We show that the weak solutions of (\ref{cfe})--(\ref{in1}) lie in $S^+$. Now, we formulate weak solutions to the given nonlinear integro-partial differential equations (\ref{cfe})--(\ref{in1}) through the following definition:
\begin{defn}\label{def1} Let $T \in \mathbb{R}_{+}$. A solution $g$ of (\ref{cfe})--(\ref{in1}) is a non-negative function $g: [0,T]\to S^+$ such that, for a.e. $z\in \mathbb{R}_{+}$ and all $t\in [0,T]$,\\
$(i)$  $s\mapsto g(z,s)$ is continuous on $[0,T]$,\\
$(ii)$  the following integrals are finite
     \begin{align*}
     &\int_{0}^{t}\int_{0}^{\infty}K(z, z_1)g(z_1,s)dz_1ds<\infty,\ \ \int_{0}^{t}\int_{0}^{\infty}C(z, z_1)g(z_1,s)dz_1ds<\infty\nonumber\\
      & \mbox{and} \ \ \int_{0}^{t}\int_{z}^{\infty} \int_{0}^{\infty}B(z|z_1;z_2)C(z_1,z_2)g(z_1,s)g(z_2,s)dz_2dz_1ds<\infty,
     \end{align*}
$(iii)$  the function $g$ satisfies the following weak formulation of (\ref{cfe})--(\ref{in1})
\begin{align*}
g(z,t)=&g_0(z)+\frac{1}{2}\int_{0}^{t} \int_{0}^{z}K(z-z_1, z_1)g(z-z_1,s)g(z_1,s)dz_1ds -\int_{0}^{t} \int_{0}^{\infty}K(z,z_1)g(z,s)g(z_1,s)dz_1ds\nonumber\\
&+\int_{0}^{t}\int_{z}^{\infty} \int_{0}^{\infty} B(z|z_1;z_2)C(z_1,z_2)g(z_1,s)g(z_2,s)dz_2dz_1ds\nonumber\\
&-\int_{0}^{t}\int_{0}^{\infty}C(z, z_1)g(z,s)g(z_1,s)dz_1ds.
\end{align*}
\end{defn}

Next, we state some hypotheses under which the existence of weak solutions to (\ref{cfe})--(\ref{in1}) is established.
\begin{hypp}\label{hyp1}
$(H1)$ $K$ and $C$ are  non-negative measurable functions on $\mathbb{R}_{+}^2:=(0, \infty)\times (0, \infty)$,\\
\\
$(H2)$  both $K$ and $C$  are symmetric, i.e. $K(z, z_1)=K(z_1,z)$ and $C(z, z_1)=C(z_1,z)$ for all $(z, z_1) \in \mathbb{R}_{+}^2$, \\
\\
$(H3)$ $K(z, z_1) \leq k_1(1+z)^{\omega}(1+z_1)^{\omega}$  for all $(z, z_1)\in \mathbb{R}_{+}^2$, $0 \leq \omega  < 1$ and some constant $k_1 >0$,\\
\\
$(H4)$  $C(z, z_1) = k_2(z^{\alpha}{z_1}^{\beta}+{z_1}^{\alpha}z^{\beta})$ for all $(z, z_1)\in \mathbb{R}_{+}^2$, $0< \alpha \leq \beta <1$ and for some constant $k_2 \geq 0$. In addition,  $K$ and $C$ satisfy locally  the following condition:
\begin{align*}
K(z, z_1) \geq 2(\zeta(z_1) -1)C(z, z_1),\ \ \ \ \forall (z, z_1) \in (0, 1) \times (0, 1),
\end{align*}
where $\zeta(z_1)$ is given in (\ref{N1}),
\\
$(H5)$  for each $W>0$ and for $z_1 \in (0,W),$ $0 < \alpha \leq \beta < 1$ and any measurable subset $U$ of  $(0,1)$ with $|U| \leq \delta$, we have
\begin{align*}
\int_{0}^{z_1} \chi_{U}(z) ({z_1}^{\alpha}\vee {z_1}^{\beta}) B(z|z_1;z_2)dz \leq \Omega_1(|U|, W), \  \text{where}\  \lim_{\delta \to 0} \Omega_1 ( \delta, W )=0,
\end{align*}
where $|U|$ denotes the Lebesgue measure of $U$, $\chi_{U}$ is the characteristic function  of $U$ given by
\begin{align*}
\chi_{U}(z):=\begin{cases}
1,\ \ & \text{if}\ z\in U, \\
0,\ \ &  \text{if}\ z\notin U,
\end{cases}
\end{align*}
$(H6)$ for $z_1 > W,$ we have $B(z|z_1;z_2) \leq k(W)z^{-\tau_2} $ for $z\in (0,W)$, $z_1 \in \mathbb{R}_{+}$, where $\tau_2 \in [0,1)$ and $k(W)>0$.
\end{hypp}

\begin{rmkk} For $k_1 = 0$ in $(H3)$, (\ref{cfe}) is transformed into purely nonlinear multiple fragmentation equation. In this case, it is difficult to control the total number of particles due to the repeated breakage of small size particles which leads to obtain an infinitely many clusters in a finite time. Therefore, the existence of weak solutions to (\ref{cfe})--(\ref{in1}) can not be shown by using weak compactness argument in $S^+$.
\end{rmkk}

In order to show the uniqueness of weak solution, we need to consider the following hypotheses on the breakup and collision kernels:\\
$(UH1)$ There is a constant $B_a>0$ and $\alpha +\beta := 2(1+\eta) > 0$ such that
\begin{align*}
B(z|z_1;z_2)C(z_1,z_2) \geq B_a {z_1}^{\eta}{z_2}^{1+\eta}\ \text{for any}\ z_1\geq 1,~z_2 \in \mathbb{R}_{+}\ \text{and}\ z\in(0,z_1).
\end{align*}
The condition $(UH1)$ is called the \emph{strong nonlinear fragmentation}.\\

The mass conserving property of the  unique solution can also easily be verified by using the integrability of higher moments.\\
\begin{rmkk}
In order to show the existence of weak solutions to (\ref{cfe})--(\ref{in1}), we consider that the coagulation kernel in $(H4)$ is sufficient strong locally than the collision kernel in the range $(0, 1) \times (0, 1)$, whereas to prove uniqueness result we consider the fragmentation dominates the coagulation for sufficiently large particles (denoted as strong nonlinear fragmentation in $(UH1)$).
\end{rmkk}

 Let us take a few examples of coagulation and breakup kernels which satisfy hypotheses $(H1)$--$(H6)$. The examples of coagulation kernels are exactly the same which are considered in Giri et al. \cite{Giri:2010A}.

Let us now turn to the following type of breakup kernels
  \begin{align*}
  B(z|z_1;z_2)= &(\nu +2)\frac{z^{\nu}}{{z_1}^{\nu +1}},\ \text{where}\ \  -2 <\nu \leq 0\  \text{and}\  z<z_1.
 \end{align*}
Since this breakage function has a physical meaning only if $-2< \nu \leq 0$. For $\nu =0$, this gives the binary fragmentation and for $-1< \nu \leq 0$,
 we get the finite number of particles, which is denoted by $\zeta(z_1)$ and written as $\zeta(z_1) = \frac{\nu +2}{\nu +1} \leq N$. But, for  $-2< \nu < -1$, we obtain an infeasible number of particles and for the case of $\nu =-1$, we obtain an infinite number of particles. It is clear from (\ref{N1}).\\

 Now, hypothesis  $(H5)$ is checked in the following way: for $z_1\in (0,W)$ and $W>0$ is fixed,
\begin{align*}
\int_{0}^{z_1} \chi_{U}(z) ( {z_1}^{\alpha} \vee {z_1}^{\beta}) B(z|z_1;z_2)dz = (\nu +2)( {z_1}^{\alpha} \vee {z_1}^{\beta}) \int_{0}^{z_1} \chi_{U}(z) \frac{z^{\nu}}{{z_1}^{\nu +1}} dz.
\end{align*}
For $p>1$, applying H\"{o}lder's inequality, we get
\begin{align*}
(\nu +2)( {z_1}^{\alpha} \vee {z_1}^{\beta}){z_1}^{-\nu -1}&\int_{0}^{z_1} z^{\nu} \chi_{U}(z) dz \leq (\nu +2)( {z_1}^{\alpha} \vee {z_1}^{\beta}){z_1}^{-\nu -1}  |U|^{\frac{p-1}{p}}\bigg( \int_0^{z_1} z^{p\nu}dz \bigg)^{1/p}\nonumber\\
=& (\nu +2)( {z_1}^{\alpha} \vee {z_1}^{\beta}){z_1}^{-\nu -1} |U|^{\frac{p-1}{p}}\bigg(\frac{{z_1}^{p\nu +1}}{p\nu +1} \bigg)^{1/p}, \mbox{~for~} \nu > -1/p \nonumber\\
\leq &\frac { \nu +2}{(p \nu +1)^{1/p}} |U|^{\frac{p-1}{p}} ({{W_1}^{\alpha -1+1/p}}\vee{{W_1}^{ \beta  -1+1/p}}), \mbox{~for~ } \alpha  \geq 1-1/p.
\end{align*}
 This implies that
\begin{align*}
\int_{0}^{z_1} \chi_{U}(z) ({z_1}^{\alpha}\vee {z_1}^{\beta} ) B(z|z_1;z_2)dz \leq \Omega_1 (|U|, W).
\end{align*}
It is worth mentioning that $p$ can be found only if $\nu >-1$ and $\alpha >0$.\\

 In order to verify the hypothesis $(H6),$ for $ {z_1} > W $ and $W>0$ is fixed, we have
\begin{align*}
 B(z|z_1;z_2)= (\nu +2)\frac{z^{\nu}}{{z_1}^{\nu +1}}\leq (\nu +2) \frac{z^{\nu}}{W ^{1+\nu}}\leq  k(W) z^{-\tau_2},
\end{align*}
where  $-1<\nu \leq 0$, $\tau_2 =-\nu \in [0,1)$  and  $k(W)\geq \frac{\nu +2}{W^{1+\nu}}$.\\

Let us verify the last hypothesis $(UH1)$ by considering the lower bound on collision kernel as follows:\\
For any ${z_1} \geq 1$, ${z_2} \in \mathbb{R}_{+}$ and $z\in (0,{z_1})$,
\begin{align*}
B(z|z_1;z_2)C({z_1},z_2) \geq & 2 ({z_1}z_2)^{\frac{\alpha+\beta}{2}} \bigg(\frac{\nu +2}{z_1}\bigg)  \bigg(\frac{z}{z_1}\bigg)^{\nu} \geq  2(\nu +2)  {z_1}^{\frac{\alpha+\beta}{2} -1} {z_2}^{\frac{\alpha+\beta}{2}} =  B_a  {z_1}^{\eta} {z_2}^{1+\eta},
\end{align*}
where $ \frac{\alpha+\beta}{2} :=1+\eta > 0$ and $B_a=2(\nu +2)$.

Now we are in the position to state the following existence result:
\begin{thm}\label{existmain theorem1}
Suppose that $(H1)$--$(H6)$ hold and assume that the initial value $g_0\in S^+$. Then, (\ref{cfe})--(\ref{in1}) have a weak solution $g \in S^+ $.
\end{thm}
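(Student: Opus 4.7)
The approach is the classical weak $L^1$ compactness method, adapted from the linear--fragmentation setting to the fully nonlinear collision-induced gain term. The three stages are: (i) existence for a truncated problem with bounded kernels; (ii) uniform a priori estimates giving weak $L^1$ compactness and tightness at infinity; (iii) passage to the weak limit inside the bilinear integrals in Definition~\ref{def1}.

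For each integer $n\geq 1$, I would introduce the truncated kernels $K_n := K\,\chi_{(0,n)^2}$, $C_n := C\,\chi_{(0,n)^2}$, $B_n(z|z_1;z_2) := B(z|z_1;z_2)\chi_{(0,n)}(z_1)$ and initial data $g_{0,n} := g_0\chi_{(0,n)}$. Since these kernels are bounded on their support, a standard Banach fixed-point argument in $C([0,T];L^1(0,n))$ yields a unique non-negative solution $g_n$ of the truncated problem for every $T>0$. Mass preservation by the breakup kernel~(\ref{mass1}) together with the fact that truncation of coagulation only removes mass from the system gives $\int_0^n z\,g_n(z,t)\,dz \le M_1(g_0)$ uniformly in $n$ and $t$. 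Differentiating the total number of particles yields
\[
\frac{d}{dt}\int_0^n g_n\,dz \,=\, \frac{1}{2}\iint_{(0,n)^2}\bigl[(\zeta(z)+\zeta(z_1)-2)C(z,z_1)-K(z,z_1)\bigr]\,g_n(z,t)\,g_n(z_1,t)\,dz\,dz_1,
\]
and I would split the square $(0,n)^2$ into $(0,1)^2$, where the integrand is non-positive by the local comparison in hypothesis $(H4)$, and its complement, where the uniform bound $\zeta\le N$, the sublinear form of $C$ given in $(H4)$, and the mass bound together produce a Gronwall-admissible estimate for $\int_0^n g_n\,dz$.

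Equi-integrability of $(g_n)$ on every bounded subinterval of $\mathbb{R}_+$ (the Dunford--Pettis condition) follows from a de~la~Vall\'ee Poussin function $\Psi$ with $\Psi(g_0)\in L^1$, by estimating the contribution of the fragmentation gain with $(H5)$ near the origin for parents $z_1\le W$ and with the integrable singularity $z^{-\tau_2}$, $\tau_2<1$, from $(H6)$ for parents $z_1>W$. Tightness at infinity is an immediate consequence of the first moment bound and Markov's inequality. Returning to the equation one further shows that $\{t\mapsto\int\varphi(z)\,g_n(z,t)\,dz\}$ is equicontinuous in $t$ for every $\varphi\in L^\infty_c(\mathbb{R}_+)$; combining a refined Arzel\`a--Ascoli argument with Dunford--Pettis extracts a subsequence $g_{n_k}$ converging to some $g\in C([0,T];L^1_w(\mathbb{R}_+))$ with $g\in S^+$.

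The main obstacle is the passage to the weak limit inside the three bilinear integrals of Definition~\ref{def1}. For the coagulation loss and the collision loss, the moment and equi-integrability estimates reduce matters to integrands localized on bounded rectangles, where weak convergence in volume combined with continuity in $t$ identifies the limit. The truly delicate term is the fragmentation gain $\int_z^\infty\!\!\int_0^\infty B(z|z_1;z_2)C(z_1,z_2)g_n(z_1,s)g_n(z_2,s)\,dz_2\,dz_1$, because $B$ may be singular at $z=0$ and $C$ grows in $z_2$ at infinity. I would split the $z_1$-integration at the threshold $W$: for $z_1\le W$, hypothesis $(H5)$ applied uniformly in $z_2$ controls the contribution of $z\in U$ with $|U|$ small uniformly in $n$; for $z_1>W$, the $z^{-\tau_2}$ domination given by $(H6)$ together with the sublinear growth of $C$ in $z_2$ and the bound on $M_1(g_n)$ produces a uniform integrable majorant. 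Combining these equi-integrability statements with the weak convergence of $g_{n_k}$ and a Vitali-type convergence lemma identifies the limit. Continuity of $g$ in $t$ and the initial trace $g(\cdot,0)=g_0$ are then verified directly from the weak formulation, completing the existence proof.
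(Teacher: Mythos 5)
Your proposal follows essentially the same route as the paper: truncation with bounded kernels, a uniform $(1+z)$-moment bound obtained from mass conservation together with the local domination $K\ge 2(\zeta-1)C$ on $(0,1)^2$ from $(H4)$, uniform integrability via splitting the fragmentation gain at a threshold $W$ (using $(H5)$ for parents $z_1\le W$ and the integrable singularity $z^{-\tau_2}$ from $(H6)$ for $z_1>W$), Dunford--Pettis plus a refined Arzel\`a--Ascoli argument, and passage to the limit in the nonlinear integrals by localizing to bounded rectangles. The only point to adjust is the appeal to a de la Vall\'ee Poussin function: since $(H5)$ is phrased as a bound on $\int\chi_U(z)\,(z_1^{\alpha}\vee z_1^{\beta})B(z|z_1;z_2)\,dz$ over sets of small measure, the equi-integrability estimate should be propagated in the equivalent modulus form $\sup\bigl\{\int_U g^n\,dz : |U|\le\delta\bigr\}$, as the paper does, rather than through a superlinear functional $\int\Psi(g^n)\,dz$, which $(H5)$ does not directly control.
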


\section{Existence}\label{existexistence1}
 In order to prove the existence of weak solutions to (\ref{cfe})--(\ref{in1}), we follow the weak $L^1$ compactness method introduced in the classical work of Stewart \cite{Stewart:1989}.

\subsection{The truncated continuous coagulation equation with collision-induced multiple fragmentation}\label{subsec trunc1}

To show the existence result, first we write (\ref{cfe})--(\ref{in1}) into the limit of a sequence of truncated equations obtained by changing the coagulation and collision kernels $K$ and $C$ by their cut-off kernels $K_n$ and $C_n$ respectively \cite{Stewart:1989}, where

\[
\hspace{-.7cm} K_n(z,z_1):=\begin{cases}
K(z, z_1),\  & \text{if}\ z+z_1 \leq n, \\
\text{0},\  &  \text{if}\ z+z_1 > n,
\end{cases}
\ \text{and}\
C_n(z,z_1):=\begin{cases}
C(z, z_1),\  & \text{if}\ z+z_1 \leq n, \\
\text{0},\  &  \text{if}\ z+z_1 > n,
\end{cases}
\]

for $n\ge 1$ and $n \in \mathbb{N}$.

For boundedness of $C_n$ and $K_n$  for each $n\ge 1$, we may follow as in \cite[Theorem 3.1]{Stewart:1989} or \cite{Walker:2002} to show the truncated equation
\begin{align}\label{trunc1}
\frac{\partial g^n(z,t)}{\partial t}  = &\frac{1}{2} \int_{0}^{z} K_n(z-z_1, z_1)g^n(z-z_1,t)g^n(z_1,t)dz_1- \int_{0}^{n-z} K_n(z, z_1)g^n(z,t)g^n(z_1,t)dz_1\nonumber\\
 & +\int_{z}^{n}\int_{0}^{n-z_1}B(z|z_1;z_2)C_n(z_1,z_2)g^n(z_1,t)g^n(z_2,t)dz_2dz_1\nonumber\\
 & - \int_{0}^{n-z} C_n(z, z_1)g^n(z,t)g^n(z_1,t)dz_1,
\end{align}
with given initial data
\begin{equation}\label{trunc in1}
g^{n}_0(z):=\begin{cases}
g_0(z),\ \ & \text{if}\ 0 < z< n, \\
\text{0},\ \ &  \text{if}\ z\geq n,
\end{cases}
\end{equation}
has a unique non-negative solution $g^n\in \mathcal{C}([0, T];L^1((0,n), dz))$ such that $g^n(z,t) \in S^+$ for all $t\ge 0$. Additionally, it satisfies the mass conservation property for all $t\in[0, T]$, i.e.
\begin{align}\label{trunc mass1}
\int_{0}^{n}zg^n(z,t)dz=\int_{0}^{n}zg^n_0(z)dz.
\end{align}

 In addition, we extend the truncated solution $g^n$ by zero in $\mathbb{R}_{+} \times \mathbb{R}_{+}$, as
\begin{equation}\label{trunc soln}
g^{n}(z, t):=\begin{cases}
g(z, t),\ \ & \text{if}\ 0 < z< n, \\
\text{0},\ \ &  \text{if}\ z\geq n,
\end{cases}
\end{equation}
for $n\ge 1$ and $n \in \mathbb{N}$.

Next, we wish to establish suitable bounds to apply Dunford-Pettis theorem [\cite{Edwards:1965}, Theorem 4.21.2] and then  equicontinuity of the sequence $(g^n)_{n\in \mathbb{N}}$ in time to use the \textit{Arzel\`{a}-Ascoli Theorem} \cite[Appendix A8.5]{Ash:1972}. This is the aim of the next section.

\subsection{ Weak compactness}\label{subs:wk}
\begin{lem}\label{compactness1}
Assume that $(H1)$--$(H6)$ hold and fix $T>0$. Let $g_0 \in S^+$ and $g^n$ be solution to (\ref{trunc1})--(\ref{trunc in1})  Then, the followings hold true: \\
$(i)$ there is a constant $V(T)>0$ (depending on $T$) such that
\begin{align*}
\int_{0}^{\infty}(1+z)g^n(z,t)dz\leq V(T)\ \ \text{for}\ \  n\ge 1\ \ \text{and all} \ \ t\in [0,T],
\end{align*}
$(ii)$ for any given $\epsilon> 0$, there exists $W_\epsilon>0$ (depending on $\epsilon$) such that, for all $t\in[0,T]$\\
\begin{align*}
\sup_{n\ge 1} \left\{ \int_{W_\epsilon}^{\infty}g^n(z,t)dz \right\}\leq \epsilon,
\end{align*}
$(iii)$ for a given $\epsilon > 0$, there exists  $\delta_\epsilon>0$ (depending on $\epsilon$) such that, for every measurable set $U$ of $\mathbb{R}_{+}$ with $|U|\leq \delta_\epsilon$, $n \ge 1$ and $t\in [0,T]$,\\
\begin{align*}
\int_{U}g^n(z,t)dz< \epsilon.
\end{align*}
\end{lem}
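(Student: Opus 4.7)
I would prove the three parts separately, with (iii) being substantially the hardest.

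For part (i), the first moment is bounded by mass conservation (\ref{trunc mass1}): $\int_0^n z g^n(z,t)\,dz=\int_0^n z g_0(z)\,dz\le\|g_0\|_{L^1(\mathbb{R}_+, zdz)}$. For the zeroth moment, I would integrate (\ref{trunc1}) in $z$, apply Fubini and (\ref{N1}), and use the symmetry of $K_n, C_n$ to obtain
\begin{align*}
\frac{d}{dt}\int_0^n g^n(z,t)\,dz = \int\!\!\int\Big[-\tfrac{1}{2} K_n(z,z_1) + (\zeta(z_1)-1)C_n(z,z_1)\Big]g^n(z,t)g^n(z_1,t)\,dz\,dz_1.
\end{align*}
The integration domain splits naturally as $(0,1)^2 \cup (\mathbb{R}_+^2 \setminus (0,1)^2)$: on $(0,1)^2$ the bracket is pointwise $\le 0$ by the local comparison in $(H4)$; on the complement, the elementary bound $(\zeta(z_1)-1)C_n \le (N-1)k_2(1+z)(1+z_1)$ (using $\alpha,\beta<1$ and $\zeta\le N$) reduces each of the three quadrants to a product of moments already controlled. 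This yields a linear differential inequality $\frac{d}{dt}M_0^n \le A+B M_0^n$; Gronwall's lemma gives uniform boundedness of $M_0^n$ on $[0,T]$, proving (i). Part (ii) is then immediate from Markov: $\int_W^\infty g^n\,dz \le W^{-1}\int_0^\infty z g^n\,dz \le V(T)/W$, so the choice $W_\epsilon = V(T)/\epsilon$ works.

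For part (iii), I would apply the de la Vall\'ee-Poussin theorem to the singleton $\{g_0\}\subset L^1$ to obtain a convex $\Phi\in C^1([0,\infty))$ with $\Phi(0)=0$, $\Phi'$ nondecreasing and concave, $\lim_{r\to\infty}\Phi(r)/r=\infty$, and $\int\Phi(g_0)\,dz<\infty$; the converse direction then reduces (iii) to the bound $\sup_{n,\,t\in[0,T]}\int\Phi(g^n(z,t))\,dz<\infty$. To secure this, I would differentiate in $t$ and aim at $\frac{d}{dt}\int\Phi(g^n)\,dz \le A + B\int\Phi(g^n)\,dz$, closing by Gronwall. After the standard change of variables, the coagulation contribution is $\tfrac{1}{2}\int\!\!\int K_n g^n(u)g^n(v)\big[\Phi'(g^n(u+v))-\Phi'(g^n(u))-\Phi'(g^n(v))\big]\,du\,dv$; after discarding the two nonpositive terms, I would combine the sublinearity $\Phi'(r)\le \Phi'(A_\eta)+\eta r$ (from concavity of $\Phi'$ and $\Phi'(r)/r\to 0$), the elementary estimate $r\Phi'(r)\le 2\Phi(r)$ (again from concavity), a Young-type decomposition of $\Phi'(g(u+v))g(u)$ into $g(u+v)\Phi'(g(u+v))+\Phi(g(u))$, and the linear bound $K\le k_1(1+u)(1+v)$ from $(H3)$, absorbing the resulting terms into $\int\Phi(g^n)\,dz$ plus a constant depending on $T$. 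The fragmentation gain $\int\!\!\int C_n g^n(z_1)g^n(z_2)\,\big[\int_0^{z_1}\Phi'(g^n(z))B\,dz\big]\,dz_1\,dz_2$ is handled by splitting $z_1<W$ versus $z_1>W$: in the first regime $(H5)$ provides equi-integrability-type control of $B$ on small subsets after a further decomposition of the $z$-integration by the size of $g^n(z)$; in the second regime the pointwise bound $(H6)$ on $B$, namely $B\le k(W)z^{-\tau_2}$, closes the estimate.

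The main obstacle will be the coagulation gain estimate in (iii): the cubic expression $K(u,v)g^n(u)g^n(v)\Phi'(g^n(u+v))$ is not directly controlled by any moment or by $\int\Phi(g^n)\,dz$, and the absorption into $A+B\int\Phi(g^n)\,dz$ requires a careful chain of the sublinearity of $\Phi'$, the Young-type bound, the estimate $r\Phi'(r)\le 2\Phi(r)$, and the linear growth of $K$ from $(H3)$. The fragmentation piece is also delicate, because $(H5)$--$(H6)$ control $\int B\,dz$ on measurable sets rather than on $\Phi'$-weighted sets, so a level-set decomposition of the integration in $z$ (the standard trick inside the de la Vall\'ee-Poussin framework) must be invoked to interface the hypotheses with the required estimate.
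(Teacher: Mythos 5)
Your parts (i) and (ii) are essentially the paper's argument: mass conservation controls the first moment, the local domination $K\ge 2(\zeta-1)C$ from $(H4)$ kills the dangerous quadratic term on $(0,1)^2$, the remaining regions reduce to a linear Gronwall inequality for $\int_0^1 g^n\,dz$ (the paper in fact only needs to integrate over $(0,1)$, since $\int_1^n g^n\le \int z g^n$ is already controlled), and (ii) is Markov's inequality. No issue there.

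Part (iii) is where you depart from the paper, and where there is a genuine gap. The paper does not use de la Vall\'ee-Poussin at all: it propagates the modulus of uniform integrability directly, setting $r^n(\delta,t):=\sup\{\int_0^W\chi_U(z)g^n(z,t)\,dz:\ U\subset(0,W),\ |U|\le\delta\}$, differentiating $\int\chi_U g^n\,dz$ in time, bounding the coagulation gain by $k_1V(T)(1+W)\,r^n(\delta,t)$ (translation invariance of Lebesgue measure applied to $\chi_U(z+z_1)$), the fragmentation gain for $z_1<W$ by $2k_2V(T)^2\,\Omega_1(|U|,W)$ directly from $(H5)$, and the gain for $z_1>W$ by $(H6)$ plus H\"older; Gronwall then gives $\sup_n r^n(\delta,t)\to0$ as $\delta\to0$. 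The point is that $(H5)$ is a statement about $\int_U B\,dz$ over small-measure sets $U$, which is exactly the object $r^n$ is built from. Your $\Phi$-moment approach cannot interface with $(H5)$ in the same way: the fragmentation gain you must absorb is $\int\!\!\int C_n\,g^n(z_1)g^n(z_2)\bigl[\int_0^{z_1}\Phi'(g^n(z))B(z|z_1;z_2)\,dz\bigr]dz_2dz_1$, and for $z_1\in(0,W)$ you have neither a pointwise bound on $B$ (only $(H6)$ gives one, and only for $z_1>W$) nor integrability of $\Phi(B)$ -- indeed for the paper's model kernel $B=(\nu+2)z^{\nu}z_1^{-\nu-1}$ with $\nu<0$, $\Phi(B)$ is \emph{not} integrable near $z=0$ for a superlinear $\Phi$. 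The level-set decomposition you invoke does not close either: splitting by $\{g^n\le R\}$ handles the bounded part via $\int_0^{z_1}B\,dz\le N$, but on $\{g^n>R\}$ you are left with $\int_{\{g^n>R\}}\Phi'(g^n)B\,dz$, where $(H5)$ controls $\int_{\{g^n>R\}}B\,dz$ but the weight $\Phi'(g^n)$ is unbounded and cannot be pulled out; the Young-type bound $\Phi'(a)b\le 2\Phi(a)+\Phi(b)$ reintroduces the non-integrable $\Phi(B)$. So as written the fragmentation estimate in your (iii) does not go through under $(H1)$--$(H6)$.

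A secondary, fixable point: your coagulation-gain estimate $\int\!\!\int K g^n(u)g^n(v)\Phi'(g^n(u+v))\,du\,dv$ produces, after the Young-type step, weighted quantities of the form $\int(1+u)^{\omega}\Phi(g^n(u))\,du$ rather than $\int\Phi(g^n)\,du$, so you would need to propagate $\int(1+z)\Phi(g^n)\,dz$ (with de la Vall\'ee-Poussin applied in $L^1((1+z)dz)$) and track this carefully. That is routine; the fragmentation obstruction above is the real one, and it is why the paper's $\sup_U$ formulation, rather than a convex superlinear functional, is the right vehicle for hypothesis $(H5)$.
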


\begin{proof}
$(i)$  Let $n\geq 1$ and $t\in [0,T]$, where $T>0$ is fixed. For $n=1$, the proof is trivial. Next, for $n>1$ and then taking integration of (\ref{trunc1}) from $0$ to $1$  with respect to $z$ and by using Leibniz's rule, we obtain
\begin{align}\label{Unibound1}
\frac{d}{dt} \int_0^1  g^n(z,t)dz =&\frac{1}{2} \int_0^1\int_{0}^{z} K_n(z-z_1, z_1)g^n(z-z_1,t)g^n(z_1,t)dz_1dz\nonumber\\
&- \int_0^1\int_{0}^{n-z} K_n(z, z_1)g^n(z,t)g^n(z_1,t)dz_1dz\nonumber\\
&+\int_0^1 \int_z^{n}\int_0^{n-z_1} B(z|z_1;z_2)C_n(z_1,z_2)g^n(z_1,t)g^n(z_2,t)dz_2dz_1dz\nonumber\\
&-\int_0^1 \int_0^{n-z}C_n(z, z_1)g^n(z,t)g^n(z_1,t)dz_1dz.
\end{align}
The first term on the right-hand side of (\ref{Unibound1}) can be simplified by using Fubini's theorem and using $z-z_1={z_1}^{'}$ and $z=z^{'}$ as
\begin{align}\label{Unibound2}
\frac{1}{2} \int_0^1\int_{0}^{z} K_n(z-z_1, z_1)g^n(z-z_1,t)g^n(z_1,t)dz_1dz =\frac{1}{2} \int_0^1\int_{0}^{1-z_1} K_n(z, z_1)g^n(z,t)g^n(z_1,t)dzdz_1.
\end{align}
Using Fubini's theorem, the third term of (\ref{Unibound1}) can be written as
\begin{align}\label{Unibound3}
\int_0^1  \int_z^{n}\int_0^{n-z_1}& B(z|z_1;z_2)C_n(z_1,z_2)g^n(z_1,t)g^n(z_2,t)dz_2dz_1dz\nonumber\\
\leq & \int_0^1\int_0^{n-z_1} \zeta(z_1) C_n(z_1,z_2)g^n(z_1,t)g^n(z_2,t)dz_2dz_1\nonumber\\
&+\int_1^n \int_0^{1}\int_0^{n-z_1} B(z|z_1;z_2)C_n(z_1,z_2)g^n(z_1,t)g^n(z_2,t)dz_2dzdz_1.
\end{align}
Substituting (\ref{Unibound2}) and (\ref{Unibound3}) into (\ref{Unibound1}) and then using $(H4)$ and (\ref{N1}), we obtain
\begin{align}\label{Unibound4}
\frac{d}{dt} \int_0^1  g^n(z,t)dz
\leq &-\frac{1}{2} \int_0^1\int_{0}^{1-z} [K_n(z, z_1)-2(\zeta(z_1)-1)C_n(z, z_1)]g^n(z,t)g^n(z_1,t)dz_1dz\nonumber\\
&- \int_0^1\int_{1-z}^{1} [K_n(z, z_1)-(\zeta(z_1)-1)C_n(z, z_1)]g^n(z,t)g^n(z_1,t)dz_1dz\nonumber\\
&- \int_0^1\int_{1}^{n-z} K_n(z, z_1)g^n(z,t)g^n(z_1,t)dz_1dz\nonumber\\
&+\int_1^n \int_0^{1}\int_0^{n-z_1} B(z|z_1;z_2)C_n(z_1,z_2)g^n(z_1,t)g^n(z_2,t)dz_2dzdz_1\nonumber\\
&+(N-1)\int_0^1\int_{1}^{n-z} C_n(z, z_1)g^n(z,t)g^n(z_1,t)dz_1dz\nonumber\\
\leq & Nk_2\int_1^n \int_0^{n-z_1} z_1(z_2^{\alpha}+z_2^{\beta})g^n(z_1,t)g^n(z_2,t)dz_2dz_1\nonumber\\
&+Nk_2\int_0^1\int_{1}^{n-z} z_1(z^{\alpha}+z^{\beta})g^n(z,t)g^n(z_1,t)dz_1dz\nonumber\\
\leq & 2Nk_2 \|g_0\|_{L^1(\mathbb{R}_{+}, zdz)}  \bigg[ 2\int_0^1 g^n(z,t)dz+ \|g_0\|_{L^1(\mathbb{R}_{+}, zdz)}  \bigg].
\end{align}
Thanks to (\ref{trunc in1}) and $g_0\in S^+$.
Again, taking integration of (\ref{Unibound4}) from $0$ to $t$ with respect to time and then applying Gronwall's inequality, we have
\begin{align}\label{Unibound5}
\int_0^1 g^n(z,t)dz \leq V_1(T),
\end{align}
where
\begin{align*}
V_1(T):=\| g_0 \|_{L^1(\mathbb{R}_{+}, dz)} e^{4Nk_2T \| g_0 \|_{L^1(\mathbb{R}_{+}, zdz)}}+ \frac{\| g_0 \|_{L^1(\mathbb{R}_{+}, zdz)}}{2}\bigg( e^{4Nk_2T \| g_0 \|_{L^1(\mathbb{R}_{+}, zdz)}} -1 \bigg).
\end{align*}
Now, using (\ref{Unibound5}), (\ref{trunc mass1}) and (\ref{trunc in1}), estimate the following integral as
\begin{align*}
\int_0^n(1+z) g^n(z,t)dz =&\int_0^1 g^n(z,t)dz+\int_1^n g^n(z,t)dz+\int_0^n z g^n(z,t)dz\nonumber\\
\leq & \int_0^1 g^n(z,t)dz+2\| g_0 \|_{L^1(\mathbb{R}_{+}, zdz)} \leq V(T),
\end{align*}
where $V(T):=V_1(T)+2 \| g_0\|_{L^1(\mathbb{R}_{+}, zdz)} $. This completes the proof of the first part of Lemma \ref{compactness1}.

\bigskip

$(ii)$ For proving the second part of Lemma \ref{compactness1}, see Giri et al. \cite{Giri:2010A}.

\bigskip

$(iii)$ Choose $ \epsilon > 0$ and let $U \subset \mathbb{R}_{+}$. Using Lemma \ref{compactness1} $(ii)$, we can choose $W \in(0, n)$ such that for all $n\in \mathbb{N}$  and $ t \in [0,T]$,
\begin{align}\label{comp3 2}
\int_{W}^{\infty}g^n(z,t)dz <  \frac {\epsilon} {2}.
\end{align}
 Fix $W>0$, for $n\ge 1$, $\delta \in (0,1)$ and $t\in [0,T]$, we define
\begin{align*}
 r^n(\delta,t):=\sup \left\{\int_{0}^{W} \chi_{U}(z)g^n(z,t)dz\ :\ U\subset (0,W) \ \ \text{and} \ \
|U|\leq \delta \right\}.
\end{align*}
For $n\ge 1$ and $t\in [0,T]$, it follows from the non-negativity of $g^n$, Fubini's theorem and (\ref{trunc1})--(\ref{trunc in1}) that
\begin{align}\label{comp3 1}
\int_{0}^{W} \frac{\partial}{\partial t}  \chi_{U}(z)& g^n(z,t)dz 
  \leq  \frac{1}{2}\int_{0}^{W}\int_{0}^{W-z}\chi_{U}(z+z_1)K_n(z, z_1)g^n(z,t)g^n(z_1,t)dz_1dz\nonumber\\
  & +\int_{0}^{W}\int_{0}^{z_1}\int_{0}^{n-z_1}\chi_{U}(z) B(z|z_1;z_2)C_n(z_1,z_2)g^n(z_1,t)g^n(z_2,t)dz_2dzdz_1\nonumber\\
   & +\int_{W}^{n}\int_{0}^{W}\int_{0}^{n-z_1}\chi_{U}(z) B(z|z_1;z_2)C_n(z_1,z_2)g^n(z_1,t)g^n(z_2,t)dz_2dzdz_1.
 \end{align}
Let us denote the first, second and third integrals on the right-hand side to (\ref{comp3 1}) by $I_1$, $I_2$ and $I_3$  respectively. Then, we estimate $I_1$, $I_2$ and $I_3$ separately.\\
 $I_1$ can be estimated similar to Giri et al. \cite{Giri:2012} as
\begin{align*}
I_1 \leq k_1 V(T)(1+W)r^n (\delta , t).
\end{align*}
  By using $(H4)$, $(H5)$ and  Fubini's theorem, the integral term $I_2$ is evaluated as
\begin{align*}
I_{2} = &\int_{0}^{W}\int_{0}^{z_1}\int_{0}^{n-z_1}\chi_{U}(z) B(z|z_1;z_2)C_n(z_1,z_2)g^n(z_1,t)g^n(z_2,t)dz_2dzdz_1\nonumber\\
   \leq & k_2\int_{0}^{W}\int_{0}^{z_1} \int_{0}^{n} \chi_{U}(z) B(z|z_1;z_2)({z_1}^{\alpha}\vee {z_1}^{\beta}) ({z_2}^{\beta}+{z_2}^{\alpha}) g^n(z_1,t)g^n(z_2,t)dz_2dzdz_1\nonumber\\
   \leq  & 2k_2V(T)^2\Omega_1(|U|, W).
\end{align*}
 Similarly, from $(H4)$, $(H6)$, Fubini's theorem and Lemma \ref{compactness1} $(i)$, the third integral $I_3$ appeared on the right-hand side of (\ref{comp3 1}), can be estimated as \begin{align*}
   I_{3}&= \int_{W}^{n}\int_{0}^{W}\int_{0}^{n-z_1}\chi_{U}(z) B(z|z_1;z_2)C_n(z_1, z_2)g^n(z_1,t)g^n(z_2,t)dz_2dzdz_1\\
   & \leq  2k_{2} k(W)V(T)^2\int_{0}^{W}\chi_{U}(z)z^{-\tau_2}dz.
 \end{align*}
Now, applying  H\"older's inequality, we have
\begin{align*}
I_{3}
 \leq 2k_{2} k(W)V(T)^{2}  {\delta}^{\frac{1-\tau_2}{1+\tau_2}}  \left(  \frac{W^{\frac{1-\tau_2}{2}}}   {{\frac{1-\tau_2}{2}}} \right)^{\frac{2\tau_2}{1+\tau_2}}.
\end{align*}
Gathering the above estimates on $I_{1}$, $I_{2},$  $I_{3}$ and inserting them into (\ref{comp3 1}), we obtain
\begin{align*}
\frac{d}{dt}\int_{0}^{W} \chi_{U}(z)g^n(z,t)dz\leq &k_1 V(T)(1+W)r^n (\delta , t)+2k_{2}V(T)^2\Omega_1(|U|, W) \nonumber\\
&+ 2k_{2} k(W)V(T)^{2}  {\delta}^{\frac{1-\tau_2}{1+\tau_2}}  \left(  \frac{W^{\frac{1-\tau_2}{2}}}   {{\frac{1-\tau_2}{2}}} \right)^{\frac{2\tau_2}{1+\tau_2}}.
\end{align*}

Integrating the above inequality with respect to $t$ and taking supremum over all $U$ such that $U \subset (0,W)$ with $|U|$ $\leq \delta$, we estimate
\begin{align*}
r^n(\delta,t) \leq & r^n(\delta,0)+k_1 V(T)(1+W)\int_0^tr^n (\delta , s)ds+2k_{2}V(T)^2T\Omega_1(|U|, W) \nonumber\\
&+ 2k_{2} k(W)TV(T)^{2}  {\delta}^{\frac{1-\tau_2}{1+\tau_2}}  \left(  \frac{W^{\frac{1-\tau_2}{2}}}   {{\frac{1-\tau_2}{2}}} \right)^{\frac{2\tau_2}{1+\tau_2}},\ \ t\in [0,T].
\end{align*}
An application of Gronwall's inequality finally gives
 \begin{align*}
r^n(\delta,t) \leq C^{*} (\delta, W) \exp(k_1V(T)T(1+W)),\ \ t\in [0,T],
\end{align*}
where
 \begin{align*}
 C^{*} (\delta, W):= & r^n(\delta,0)+2\Omega_1(|U|, W)k_{2}TV(T)^2 + 2k_{2} k(W)TV(T)^{2}  {\delta}^{\frac{1-\tau_2}{1+\tau_2}}  \left(  \frac{W^{\frac{1-\tau_2}{2}}}   {{\frac{1-\tau_2}{2}}} \right)^{\frac{2\tau_2}{1+\tau_2}}.
 \end{align*}
This shows that
\begin{align}\label{comp3 3}
\mbox{sup}_{n}\{ r^n(\delta,t)\}\rightarrow 0~~ \mbox{as}~~ \delta \rightarrow 0.
\end{align}
Adding (\ref{comp3 2}) and (\ref{comp3 3}), we thus obtain the required result.
\end{proof}
Hence, from Dunford-Pettis theorem, we have $(g^{n})_{n \in \mathbb{N}}$ is a relatively compact subset of $S^+$ for each $t \in [0, T]$.\\

Next, the equicontinuity in time of the family $\{g^n(t), t\in[0,T]\} $ in $L^1(\mathbb{R}_{+}, dz)$ can easily be shown as similar to \cite{Giri:2010A, Giri:2012, Stewart:1989} . Then according to a refined version of the \textit{Arzel\`{a}-Ascoli theorem}, see \cite[Theorem 2.1]{Stewart:1989} or \textit{Arzel\`{a}-Ascoli theorem}, see Ash [\cite{Ash:1972}, page 228], we conclude that there exists a subsequence (${g^{n_k}}$) such that
\begin{align*}
\lim_{n_k\to\infty} \sup_{t\in [0,T]}{\left\{ \left| \int_0^\infty  [ g^{n_k}(z,t) - g(z,t)]\ \phi(z)\ dz \right| \right\}} = 0, \label{vittel}
\end{align*}
for all $T>0$, $\phi \in L^\infty(\mathbb{R}_{+})$ and some $g \in \mathcal{C}_w([0,T]; L^1(\mathbb{R}_{+}, dz))$, where  $\mathcal{C}_w([0, T]; L^1 (\mathbb{R}_{+}, dz))$ is the space of all weakly continuous functions from $[0, T]$ to $L^1 (\mathbb{R}_{+},  dz)$. This implies that
\begin{equation}\label{equicontinuity f}
  g^{n_k}(t) \rightharpoonup g(t)\ \text{in}\  L^1(\mathbb{R}_{+}, dz) \ \text{as}\ n \to \infty,
\end{equation}
converges uniformly for $t \in [0,T]$ to some $g \in \mathcal{C}_w([0,T]; L^1(\mathbb{R}_{+}, dz))$.\\

Next, for any $ m>0$, $t\in [0,T], $ since we have $g^{n_k} \rightharpoonup g$, we obtain
\begin{align*}
\int_{0}^{m}zg(z,t)dz = \lim_{ n_k\rightarrow {\infty}} \int_{0}^{m} z g^{n_k}{(z,t)}dz \leq \|g_0\|_{L^1(\mathbb{R}_{+}, zdz)} < \infty.
\end{align*}
Using (\ref{trunc mass1}), the non-negativity of each $g^{n_k}$ and $g$, then as $ m\rightarrow \infty $ implies that $g \in S^+ $.

\subsection{Convergence of approximated integrals}\label{subs:limit}
Now we prove that the limit function $g$ obtained in (\ref{equicontinuity f}) is actually a weak solution to (\ref{cfe})--(\ref{in1}).
We shall use weak continuity and convergence properties of some operators which we define below. For $g\in S^+$, $n\ge 1$ and $z\in \mathbb{R}_{+}$, we define
\begin{align*}
 & P_{1}^n(g^n)(z,t):=\frac{1}{2}\int_{0}^{z}K_n(z-z_1, z_1)g^n(z-z_1,t)g^n(z_1,t)dz_1,\\
 & P_{1}(g)(z,t):=\frac{1}{2}\int_{0}^{z}K(z-z_1, z_1)g(z-z_1,t)g(z_1,t)dz_1,\\
 &P_{2}^n(g^n)(z,t):=\int_{0}^{n-z}K_n(z, z_1)g^n(z,t)g^n(z_1,t)dz_1,\  \ P_{2}(g)(z,t):=\int_{0}^{\infty}K(z, z_1)g(z,t)g(z_1,t)dz_1,\\
 & P_{3}^n(g^n)(z,t):=\int_{z}^{n}\int_{0}^{n-z_1}B(z|z_1;z_2)C_n(z_1,z_2)g^n(z_1,t)g^n(z_2,t)dz_2dz_1,\\
 & P_{3}(g)(z,t):=\int_{z}^{\infty}\int_{0}^{\infty}B(z|z_1;z_2)C(z_1,z_2)g(z_1,t)g(z_2,t)dz_2dz_1,\\
 & P_{4}^n(g^n)(z,t):=\int_{0}^{n-z}C_n(z, z_1)g^n(z,t)g^n(z_1,t)dz_1,\  \ P_{4}(g)(z,t):=\int_{0}^{\infty}C(z, z_1)g(z,t)g(z_1,t)dz_1,
\end{align*}
where $P^n:=P_1^n-P_2^n+P_3^n-P_4^n$ and $P:=P_1-P_2+P_3-P_4$.\\

We then have the following result:
\begin{lem}\label{convergence lemma1}
Let $(g^{n})_{n\in \mathbb{N}}$ be a bounded sequence in $S^+$ and $g\in S^{+}$, where $\|g^n\|_{L^1( \mathbb{R}_{+}, (1+z)dz)} \leq V(T)$ and $g^n\rightharpoonup g$ in $L^1(\mathbb{R}_{+}, dz)$ as $n\to \infty $. Then, for each $W> 0$ and $i\in {1,2, 3, 4}$, we have
\begin{equation}
P^n(g^n)\rightharpoonup P(g)\ \ \text{in} \ \ L^1((0,W), dz)\ \ \text{as}\ \ n\to \infty. \label{luchon}
\end{equation}
\end{lem}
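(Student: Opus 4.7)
The plan is to prove term-by-term weak convergence $P_i^n(g^n) \rightharpoonup P_i(g)$ in $L^1((0,W), dz)$ for each $i \in \{1,2,3,4\}$; by linearity this delivers (\ref{luchon}). Fix $W > 0$, $t \in [0,T]$, and a test function $\phi \in L^\infty((0,W))$. The common recipe is a truncation-plus-weak-convergence argument: split the relevant spatial integration into a bounded piece and its complement, use the uniform first-moment bound from Lemma \ref{compactness1}(i) together with the growth hypotheses on $K$, $C$, and $B$ to make the complement small uniformly in $n$, and pass to the limit on the bounded piece using $g^n \rightharpoonup g$.

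For the coagulation operators $P_1^n$ and $P_2^n$, I would follow the arguments of Stewart \cite{Stewart:1989} and Giri et al.\ \cite{Giri:2010A, Giri:2012} essentially verbatim; the growth hypothesis $(H3)$ with exponent $\omega < 1$ is precisely what is needed to control the tail after truncation. The collision loss term $P_4^n$ is handled identically with $C$ in place of $K$: since $C(z,z_1) = k_2(z^\alpha z_1^\beta + z_1^\alpha z^\beta)$ with $\alpha,\beta<1$, for $z \in (0,W)$ we have $C(z, z_1) \leq k_2(W^\alpha + W^\beta)(1 + z_1)$, so the $z_1$-tail is controlled by the first-moment bound on $g^n$. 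On the bounded piece, the standard identity
\begin{equation*}
g^n(z) g^n(z_1) - g(z) g(z_1) = g(z)[g^n(z_1) - g(z_1)] + g^n(z_1)[g^n(z) - g(z)]
\end{equation*}
combined with Fubini and weak convergence of $g^n$ tested against bounded kernels gives the conclusion.

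The delicate term is the multiple-fragmentation gain $P_3^n$. After Fubini I rewrite
\begin{equation*}
\int_0^W \phi(z)\, P_3^n(g^n)(z,t)\, dz = \iint_{z_1 + z_2 \leq n} \Phi(z_1, z_2)\, g^n(z_1, t) g^n(z_2, t)\, dz_2\, dz_1,
\end{equation*}
with
\begin{equation*}
\Phi(z_1, z_2) := C(z_1, z_2) \int_0^{\min(z_1, W)} \phi(z)\, B(z | z_1; z_2)\, dz.
\end{equation*}
To bound the inner integral uniformly in $(z_1, z_2)$ I split on $z_1 \leq W$ versus $z_1 > W$: in the first regime use $\int_0^{z_1} B(z|z_1;z_2)\, dz = \zeta(z_1) \leq N$ from (\ref{N1}); in the second regime use $(H6)$, giving $\int_0^W |\phi(z)| B(z|z_1;z_2)\, dz \leq \|\phi\|_\infty k(W) \int_0^W z^{-\tau_2}\, dz$, finite since $\tau_2 < 1$. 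Combined with the polynomial growth of $C$ this yields an estimate of the form $|\Phi(z_1, z_2)| \leq C_\phi (1+z_1)(1+z_2)$, which is compatible with the first-moment bound of Lemma \ref{compactness1}(i). A large-radius truncation in $(z_1, z_2)$ then produces a uniformly small tail; on the bounded square $(0,R)^2$ the weight $\Phi$ is essentially bounded uniformly in $n$, and the weak convergence of the product $g^n(z_1) g^n(z_2)$ in $L^1((0,R)^2)$ (obtained via the decomposition above applied to bounded marginals) completes the argument. Hypothesis $(H5)$ provides the additional equi-integrability needed to control contributions when $\phi$ effectively concentrates near the origin.

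The main obstacle I anticipate is the triple-integral structure of $P_3^n$ coupled with a breakup kernel that may be singular at the origin: the uniform bound on $\Phi$ requires three distinct ingredients (the finite branching number (\ref{N1}), hypothesis $(H5)$, and hypothesis $(H6)$) to be stitched together across regimes of $z_1$, and additional care is needed to reconcile the diagonal cutoff $z_1 + z_2 \leq n$ with the passage to the limit in the unbounded $(z_1, z_2)$-plane.
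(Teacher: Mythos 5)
Your overall route is the same as the paper's: term-by-term convergence, citing the literature for $P_1,P_2$, and for $P_3,P_4$ a large-radius truncation in $(z_1,z_2)$ plus passage to the limit on the bounded piece, with (\ref{N1}) used for $z_1\le W$ and $(H6)$ for $z_1>W$. However, two steps as you state them do not close.

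First, your bound $|\Phi(z_1,z_2)|\le C_\phi(1+z_1)(1+z_2)$ is too crude to make the large-radius tail \emph{small}: against the uniform first-moment bound it only gives $\iint_{z_1+z_2>R}|\Phi|\,g^n g^n\,dz_2dz_1\le C_\phi V(T)^2$, which is bounded but does not tend to $0$ as $R\to\infty$ uniformly in $n$. What is actually needed, and what the paper uses in (\ref{conv21})--(\ref{conv23}) and (\ref{conv000}), is the strict sublinearity in $(H4)$: $C(z_1,z_2)\le 2k_2(1+z_1)^{\beta}(1+z_2)^{\beta}$ with $\beta<1$, so that the tail in each variable picks up a factor $(1+b)^{\beta-1}$ or $(1+c)^{\beta-1}$ that can be made $<\epsilon$. (You invoke exactly this mechanism for $K$ via $\omega<1$, so the fix is immediate, but the estimate you wrote for $\Phi$ discards it.) Second, on the bounded piece the phrase ``weak convergence of $g^n$ tested against bounded kernels'' hides the real difficulty: after the product decomposition one of the two terms is $\int_0^W[g^n(z)-g(z)]\,\phi(z)A_1(g^n)(z)\,dz$ with $A_1(g^n)(z)=\int_0^a C(z,z_1)g^n(z_1)\,dz_1$, i.e.\ the multiplier itself depends on $n$, and weak convergence alone does not apply. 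The paper resolves this by showing $A_1(g^n)\to A_1(g)$ pointwise and boundedly, upgrading to almost-uniform convergence via Egoroff's theorem, and controlling the exceptional set using the uniform equi-integrability of $(g^n)$ from Lemma \ref{compactness1}$(iii)$ (see (\ref{conv3})--(\ref{convp4})); an analogous argument is needed for the $Q_l^n$ terms in $P_3$. Relatedly, $(H5)$ does not enter the convergence proof directly as extra equi-integrability near the origin; its role is in establishing Lemma \ref{compactness1}$(iii)$, which is then the ingredient that makes the Egoroff step work. With these two repairs your outline matches the paper's proof.
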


\begin{proof} Let $W>0, z\in (0,W]$  and $\chi $ be the characteristic function.  Suppose $\phi$ belongs to $L^\infty(0,W)$. We prove that $P_i^n(g^n)\rightharpoonup P_i(g)$ for $i=1, 2, 3, 4.$\\

For $i=1,2$, $P_i \rightharpoonup P_i$ can easily be shown as in \cite{Giri:2010, Giri:2012, Stewart:1989}.\\

For $i=3$\\
Given $\epsilon >0$ and we can choose $b> W$ large enough such that
\begin{align}\label{conv21}
\frac{2k_2 k(W) W^{1-\tau_2}} {1-\tau_2} \| \phi \|_{L^{\infty}(0,W)}[  V(T)^2+\|g\|^2_{L^1(\mathbb{R}_{+}, (1+z)dz)} ](1+b)^{\beta -1}< \frac{\epsilon}{2}.
\end{align}
Then, by $(H4)$, $(H6)$, (\ref{conv21}) and Fubini's theorem, we obtain
\begin{align}\label{conv22}
\bigg| \int_{0}^{W}\int_{b}^{\infty}\int_{0}^{\infty}& \phi(z)B(z|z_1;z_2)C(z_1,z_2)[g^n(z_1,t)g^n(z_2,t)-g(z_1,t)g(z_2,t)]dz_2dz_1dz \bigg| \nonumber\\
\leq &2k_2V(T) k(W) \| \phi \|_{L^{\infty}(0,W)} \int_{0}^{W}\int_{b}^{\infty} z^{-\tau_2} (1+z_1)^{\beta}g^n(z_1,t)dz_1dz\nonumber\\
&+2k_2k(W) \|g\|_{L^1(\mathbb{R}_{+}, (1+z)dz)} \| \phi \|_{L^{\infty}(0,W)} \int_{0}^{W}\int_{b}^{\infty} z^{-\tau_2} (1+z_1)^{\beta}g(z_1,t)dz_1dz\nonumber\\
\leq & \frac{2k_2 k(W) W^{1-\tau_2}} {1-\tau_2} \| \phi \|_{L^{\infty}(0,W)} [V(T)^2+\|g\|^2_{L^1(\mathbb{R}_{+}, (1+z)dz)}](1+b)^{\beta - 1} < \frac{\epsilon}{2}.
\end{align}
Again for $\epsilon >0$ and we can find sufficiently large $c >0$  such that
\begin{align}\label{conv24}
2k_2 N\| \phi \|_{L^{\infty}(0,W)}[  V(T)^2+\|g\|^2_{L^1(\mathbb{R}_{+}, (1+z)dz)} ](1+c)^{\beta -1}< \frac{\epsilon}{2}.
\end{align}
For a.e. $z\in(0,W]$, by using $(H3)$, $(H4)$, (\ref{N1}), (\ref{conv24}) and Lemma  \ref{compactness1} (i), we obtain
\begin{align}\label{conv23}
\bigg| \int_0^W \int_z^b \int_c^{\infty}\phi(z)&B(z|z_1;z_2)C(z_1,z_2)[g^n(z_1,t)g^n(z_2,t)-g(z_1,t)g(z_2,t)]dz_2dz_1dz\bigg| \nonumber\\
\leq & 2k_2N\|\phi\|_{L^{\infty}(0,W)}  (1+c)^{\beta -1} [V(T)^2+\|g\|^2_{L^1(\mathbb{R}_{+}, (1+z)dz)}]< \frac{\epsilon}{2}.
\end{align}

For $z_1>n$, from (\ref{trunc mass1}), we have $g^n(z_1, t)=0$. Thus the following integral is
 \begin{align}\label{pass 01}
\bigg| \int_{0}^{W}\int_{n}^{\infty}\int_{0}^{\infty}\phi(z)B(z|z_1;z_2)C(z_1,z_2)g^n(z_1,t)g^n(z_2,t)dz_2dz_1dz \bigg| = 0.
\end{align}
By using  $(H4)$, $(H6)$, (\ref{N1}) and Fubini's theorem, we simplify the following integral
\begin{align*}
 \int_{0}^{W}\int_{z}^{n}\int_{n-z_1}^{\infty}&\phi(z)B(z|z_1;z_2)C(z_1,z_2)g^n(z_1,t)g^n(z_2,t)dz_2dz_1dz \\
  =&\int_{0}^{W}\int_{0}^{z_1}\int_{n-z_1}^{\infty}\phi(z)B(z|z_1;z_2)C(z_1,z_2)g^n(z_1,t)g^n(z_2,t)dz_2dzdz_1 \\
  &+\int_{W}^{n}\int_{0}^{W}\int_{n-z_1}^{\infty}\phi(z)B(z|z_1;z_2)C(z_1,z_2)g^n(z_1,t)g^n(z_2,t)dz_2dzdz_1 \\
 \leq &  2\|\phi\|_{L^{\infty}(0,W)}k_2 \bigg[ V(T)N \int_{n-z_1}^{\infty} (1+z_2)^{\beta}g^n(z_2,t)dz_2 \\ &
 + k(W)\int_{W}^{n}\int_{0}^{W} \int_{n-z_1}^{\infty}  z^{-\tau_2}(1+z_1)^{\beta}g^n(z_1,t)  (1+z_2)^{\beta}g^n(z_2,t)dz_2 dzdz_1 \bigg]\nonumber\\
  \leq &  2k_2\|\phi\|_{L^{\infty}(0,W)} \bigg[ \frac{NV(T)^2}{(1+n_k-z_1)^{1-\beta}} +k(W)V(T)\frac{W^{1-\tau_2}}{1-\tau_2} \int_{n_k-z_1}^{\infty}  (1+z_2)^{\beta}g^n(z_2, t)dz_2 \bigg]\nonumber\\
\leq &  2k_2\|\phi\|_{L^{\infty}(0,W)} V(T)^2 \frac{\bigg[ N+k(W)\frac{W^{1-\tau_2}}{1-\tau_2}\bigg]}{(1+n_k-z_1)^{1-\beta} }.
 \end{align*}
 This implies
 \begin{align}\label{conv26}
   \int_{0}^{W}\int_{z}^{n}\int_{n-z_1}^{\infty}\phi(z)B(z|z_1;z_2)C(z_1,z_2)g^n(z_1,t)g^n(z_2,t)dz_2dz_1dz \to 0 \ \text{as}\ n_k \to \infty.
\end{align}

Next, let us consider the following integral by using Fubini's theorem, as
\begin{align}\label{conv27}
 \bigg| \int_0^W&\int_z^b \int_0^c  \phi(z)B(z|z_1;z_2)C(z_1,z_2)  g^n(z_1,t)g^n(z_2,t)-g(z_1,t)g(z_2,t) dz_2dz_1dz \bigg| \nonumber\\
\leq & \bigg|\int_0^W\int_0^{z_1} \int_0^c  \phi(z)B(z|z_1;z_2)C(z_1,z_2)  g^n(z_2, t)[g^n(z_1, t)-g(z_1, t)] dz_2dzdz_1\bigg| \nonumber\\
& +\bigg| \int_W^b \int_0^W \int_0^c  \phi(z)B(z|z_1;z_2)C(z_1,z_2)  g^n(z_2, t)[g^n(z_1, t)-g(z_1, t)] dz_2dzdz_1\bigg| \nonumber\\
&+\bigg| \int_0^W\int_0^{z_1} \int_0^c  \phi(z)B(z|z_1;z_2)C(z_1,z_2)  g(z_1, t)[g^n(z_2, t)-g(z_2, t)] dz_2dzdz_1 \bigg| \nonumber\\
&+\bigg| \int_W^b \int_0^W \int_0^c  \phi(z)B(z|z_1;z_2)C(z_1,z_2)  g(z_1, t)[g^n(z_2, t)-g(z_2, t)] dz_2dzdz_1\bigg| =: \sum_{l=1}^4 |Q_l^n|.
\end{align}
First, we show that $\lim_{n \to \infty} |Q_1^n|=0$. Since, we have $g \rightharpoonup g $ in $L^1(\mathbb{R_+}, dz)$, then
\begin{align*}
|Q_1^n|=& \bigg| \int_0^W  [g^n(z_1, t)-g(z_1, t)] \int_0^{z_1} \int_0^c  \phi(z)B(z|z_1;z_2)C(z_1,z_2)  g^n(z_2, t) dz_2dzdz_1 \bigg|.
\end{align*}
Next, by using $(H3)$ and $(\ref{N1})$, we have
\begin{align}\label{Q1}
  \int_0^{z_1} \int_0^c  \phi(z)B(z|z_1;z_2)C(z_1,z_2)  g^n(z_2, t) dz_2dz \leq 2\|\phi\|_{L^{\infty}(0,W)} N k_2 (1+W) V(T) \in L^{\infty} (0, W).
\end{align}
Thus, we get $\lim_{n \to \infty} |Q_1^n|=0$. Now, let us consider $|Q_2^n|$, as

\begin{align*}
|Q_2^n|=& \bigg| \int_W^b  [g^n(z_1, t)-g(z_1, t)] \int_0^{W} \int_0^c  \phi(z)B(z|z_1;z_2)C(z_1,z_2)  g^n(z_2, t) dz_2dzdz_1 \bigg|.
\end{align*}
Similarly, by using $(H3)$ and $(H6)$, we show that
\begin{align*}
 \int_0^{W} \int_0^c & \phi(z)B(z|z_1;z_2)C(z_1,z_2)  g^n(z_2, t) dz_2dz\nonumber\\
\leq & 2\|\phi\|_{L^{\infty}(0,W)}k_2 (1+z_1)V(T)k(W) \frac{W^{1-\tau_2}}{1-\tau_2} \in L^{\infty} (W, b).
\end{align*}
Then by weak convergence of $g^n$ to $g$ guarantees that
\begin{align}\label{Q2}
\lim_{n \to \infty } |Q_2^n|=0.
\end{align}
Using similar argument, one can easily be seen that $|Q_3^n|$ and $|Q_4^n|$ go to $0$, as $n \to \infty$.
Hence, we have
\begin{eqnarray}\label{conv28}
\bigg| \int_0^W\int_z^b \int_0^c  \phi(z)B(z|z_1;z_2)C(z_1,z_2)  g^n(z_1,t)g^n(z_2,t)-g(z_1,t)g(z_2,t) dz_2dz_1dz \bigg|  \to 0\ \ \text{as}\ n\to \infty.
\end{eqnarray}

Now, using (\ref{conv22}), (\ref{conv23}), (\ref{pass 01}), (\ref{conv26}) and (\ref{conv28}), we obtain for $n > W$
\begin{align}\label{pass 1}
\bigg| \int_{0}^{W}\phi(z) &[P_3(g^n)(z,t)-P_3(g)(z,t)] dz\bigg|\nonumber\\ 
 \leq &\bigg|\int_{0}^{W}\phi(z) \bigg[\int_{z}^{b}\int_{0}^c B(z|z_1;z_2)C(z_1,z_2)[g^n(z_1,t)g^n(z_2,t)-g(z_1,t)g(z_2,t)]dz_2dz_1  \nonumber\\
 &+ \int_{z}^{b}\int_{c}^{\infty} B(z|z_1;z_2)C(z_1,z_2)[g^n(z_1,t)g^n(z_2,t)-g(z_1,t)g(z_2,t)]dz_2dz_1  \nonumber\\
 &+ \int_{b}^{\infty}\int_{0}^{\infty} B(z|z_1;z_2)C(z_1,z_2)[g^n(z_1,t)g^n(z_2,t)-g(z_1,t)g(z_2,t)]dz_2dz_1 \nonumber \\
 &- \int_{n}^{\infty}\int_{0}^{\infty} B(z|z_1;z_2)C(z_1,z_2) g^n(z_1,t)g^n(z_2,t) dz_2dz_1 \nonumber\\
 &- \int_{z}^{n}\int_{n-z_1}^{\infty} B(z|z_1;z_2)C(z_1,z_2) g^n(z_1,t)g^n(z_2,t) dz_2dz_1\bigg]dz \bigg| < \epsilon.
 \end{align}
Since $\phi$ is arbitrary,
\begin{align*}
 \lim_{n \rightarrow \infty }{ P_3{(g^n)}\rightharpoonup P_3{(g)} }.
\end{align*}

Next, we show for $i=4$\\
 Given $\epsilon >0$ and for an arbitrary $\phi \in L^{\infty}(\mathbb{R}_{+})$, then we can choose $a>0$ large enough such that
\begin{align}\label{conv000}
 k_2\|\phi\|_{L^{\infty}(0,W)}[V(T)^2+ \|g\|^2_{L^1(\mathbb{R}_{+}, (1+z)dz)}](1+a)^{\beta -1}< \frac{ \epsilon}{2}.
\end{align}
For $g\in S^+,$ we define the operator $A_1$ by
\begin{align}\label{conv0}
A_1(g)(z,t):=\int_0^a \phi(z)C(z, z_1)g(z_1,t)dz_1.
\end{align}
 For $z\in (0,W)$ a.e., the function $\phi_z$ defined by
\begin{align*}
 \phi_z(\cdot):= \chi_{(0,a)}(\cdot)\phi(z)C(z,\cdot)\ \ \text{is in}\ L^{\infty}(\mathbb{R}_{+}).
\end{align*}
Since $g^n \rightharpoonup g$ in $L^1(\mathbb{R}_{+}, dz)$, it follows that
\begin{align}\label{conv1}
 A_1(g^n)(z,t) \to A_1(g)(z,t)\ \ \text{as}\ n \to \infty \ \text{for}\ z\in (0, W).
\end{align}
From $(H4)$ and  H\"{o}lder's inequality, we have
\begin{align}\label{conv2}
| A_1(g^n)(z,t)| & \leq 2k_2\|\phi \|_{L^{\infty}(0, W)}V(T)(1+W)\ \ \text{for a.e.}\ z\in (0,W).
\end{align}
Similarly, this can be shown for $A_1(g)$   which shows that $A_1(g^n)$ and $A_1(g)$ belong to $L^{\infty}(0,W).$ It follows from (\ref{conv1}) and Egoroff's theorem that
\begin{align}\label{conv3}
 A_1(g^n)(z,t) \to A_1(g)(z,t)\ \ \ \text{as}\ \ n \to \infty \ \text{almost uniformly on}\ (0, W),
\end{align}
that is for a given $\delta >0$ there exists a set $F \subseteq (0, W]$ such that the measure of $F$, $\delta >\mu (F)$   and  $A_1(g^n) \to A_1(g)$
uniformly on $(0,W]\setminus F$.\\
 Choose $\epsilon >0$. By Lemma \ref{compactness1} $(iii)$ and  $g^n \rightharpoonup g$ in $L^1(\mathbb{R}_{+}, dz)$, there is a $\delta >0$ such that for all $n$, we have
\begin{align}\label{conv4}
\int_E g^n(z,t)dz < \frac{\epsilon}{4\|\phi\|_{L^{\infty}(0,W)}k_2V(T)(1+W)}
\end{align}
whenever $\mu(E) < \delta$. By (\ref{conv3}), there is a set $F \subseteq (0,W]$ such that $\mu(F) < \delta$ and $A_1(g^n) \to A_1(g)$ uniformly on $(0,W]\setminus F.$ Thus
\begin{align}\label{conv5}
 A_1(g^n)(z,t) \to A_1(g)(z,t)\ \ \text{in}\ L^{\infty}((0,W]\setminus F)\ \text{as}\ n\to \infty.
\end{align}
Applying  H\"{o}lder's inequality, we estimate
\begin{align}\label{conv9}
\bigg|\int_0^W g^n(z,t)[A_1(g^n)(z,t)-A_1(g)(z,t)]dz_1dz \bigg|
\leq & \|A_1(g^n)-A_1(g)\|_{L^{\infty}((0, W]\setminus F)}\int_{(0,W]\setminus F} g^n(z,t)dz \nonumber\\
& +\|A_1(g^n)-A_1(g)\|_{L^{\infty}(F)} \int_F g^n(z,t)dz.
\end{align}
Set $E:=F$. By considering (\ref{conv2}) and (\ref{conv4}), we obtain
\begin{align}\label{convp2}
\|A_1(g^n)-&A_1(g)\|_{L^{\infty}(F)} \int_F g^n(z,t)dz 
 \leq \sup_{z\in F} \{|A_1(g^n)(z,t)+A_1(g)(z,t)|\}\int_F g^n(z,t)dz\nonumber\\
&\leq  4k_2 \| {\phi} \|_{L^{\infty}(0, W)}V(T)(1+W) \frac{\epsilon}{4 k_2V(T)(1+W)\|{\phi}\|_{L^{\infty}(0,W)}}\leq \epsilon.
\end{align}
From (\ref{conv5}), (\ref{convp2}) and Lemma \ref{compactness1} (i), we can simplified (\ref{conv9}) as \begin{align}\label{convp3}
\bigg| \int_0^W g^n(z,t)\{A_1(g^n)(z,t)& -A_1(g)(z,t)\}dz \bigg|\nonumber\\
& \leq \|A_1(g^n)-A_1(g)\|_{L^{\infty}((0,W]\setminus F)}V(T)+ \epsilon \to \epsilon \ \text{as}\ n \to \infty.
\end{align}
Since $ \epsilon >0$ is arbitrary, then we observe that
\begin{align}\label{convp4}
\bigg| \int_0^W g^n(z,t)\{A_1(g^n)(z,t)& -A_1(g)(z,t)\}dz \bigg| \to 0\ \text{as}\ n\to \infty.
\end{align}
For $A_1(g)\in L^{\infty}(0,W)$ and $g^n \rightharpoonup g$ in $ L^1(\mathbb{R}_{+}, dz)$ as $n \to \infty$ the definition of weak convergence implies that
\begin{align}\label{convp5}
\bigg| \int_0^W [g^n(z,t)-g(z,t)]A_1(g)(z,t)dz \bigg| \to 0\ \text{as}\ n\to \infty.
\end{align}
Using (\ref{convp4}) and (\ref{convp5}), we have
\begin{align}\label{conv7}
\bigg|\int_0^W \int_0^a &\phi(z)  C(z, z_1)[g^n(z,t)g^n(z_1,t)-g(z,t)g(z_1,t)]dz_1dz \bigg|\nonumber\\
&\leq \bigg|\int_0^W g^n(z,t)[A_1(g^n)(z,t)-A_1(g)(z,t)]dz\bigg|  +\bigg| \int_0^W[g^n(z,t)-g(z,t)]A_1(g)(z,t)]dz \bigg|\nonumber\\
& \to 0\ \text{as}\ n \to {\infty}.
\end{align}
Then, by using (\ref{conv000}) and Lemma \ref{compactness1} $(i)$, we estimate the following term as
\begin{align}\label{conv6}
\bigg|\int_0^W \int_a^{\infty} \phi(z) C(z, z_1)&[g^n(z,t)g^n(z_1,t)-g(z,t)g(z_1,t)]dz_1dz \bigg|\nonumber\\
\leq & 2k _2 \|\phi\|_{L^{\infty}(0,W)} \int_a^{\infty} (1+z_1)^{\beta -1}[V(T)(1+z_1)g^n(z_1,t)\nonumber\\
&+\|g\|_{L^1(\mathbb{R}_{+}, (1+z)dz)} (1+z_1)g(z_1,t)]dz_1\nonumber\\
\leq & 2k _2 \|\phi\|_{L^{\infty}(0,W)} [V(T)^2+\|g\|^2_{L^1(\mathbb{R}_{+}, (1+z)dz)}](1+a)^{\beta -1} < \epsilon.
\end{align}
Next, applying $(H4)$ and Lemma \ref{compactness1} $(i)$, we consider the following integral as
\begin{align*}
 \int_{0}^{W}\int_{n-z}^{\infty}\phi(z)C(z, z_1)g^n(z,t)g^n(z_1,t)dz_1dz
\leq 2k_2\|\phi\|_{L^{\infty}(0,W)}\frac{V(T)^2}{(1+n-z)^{1-\beta}} .
\end{align*}
This implies that
\begin{align}\label{conv8}
\int_{0}^{W}\int_{n-z}^{\infty}\phi(z)C(z, z_1)g^n(z,t)g^n(z_1,t)dz_1 dz \to  0\ \mbox{as}\ n \to \infty.
\end{align}
Let us consider the following integral and use  $(H4)$ to simplify it as
\begin{align}\label{app 1}
\bigg| \int_{0}^{W}\phi(z) & [P_4(g^n)(z,t)-P_4(g)(z,t)] dz\bigg|\nonumber\\
  \leq &  \bigg| \int_{0}^{W}\int_{0}^{a} \phi(z)C(z, z_1) g^n(z,t)g^n(z_1,t)-g(z,t)g(z_1,t) dz_1dz \nonumber \\
  &+ \int_{0}^{W}\int_{a}^{\infty}\phi(z)C(z, z_1) g^n(z,t)g^n(z_1,t)-g(z,t)g(z_1,t) dz_1dz \nonumber \\
& - \int_{0}^{W}\int_{n-z}^{\infty}\phi(z)C(z, z_1)g^n(z,t)g^n(z_1,t)dz_1dz \bigg|.
\end{align}
From (\ref{conv7}), (\ref{conv6})  and (\ref{conv8}), (\ref{app 1}) implies that \begin{align*}
\left| \int_{0}^{W}\phi(z) \{P_4(g^n)(z,t)-P_4{g}(z,t)\} dz\right| < \epsilon.
\end{align*}
Since $\phi$ is an arbitrary function, therefore, we have
\begin{align*}
\lim_{n\to \infty}{ P_4(g^n)\rightharpoonup P_4(g). }
\end{align*}

We conclude that (\ref{luchon}) holds. Thus, this completes the proof of Lemma \ref{convergence lemma1}.
\end{proof}

Now we are in a position to prove Theorem \ref{existmain theorem1} by using above results.

\begin{proof}
 \textit{of Theorem \ref{existmain theorem1}}: Fix $W \in (0, n_k)$, $T>0$  and let us consider $(g^{n_k})_{n\in N}$ be an weakly convergent subsequence of the approximating solutions obtained from (\ref{equicontinuity f}). Hence, from  (\ref{equicontinuity f}) and for  $t\in [0,T]$, we get
\begin{align}\label{exist lim}
 g^{n_k}(z,t) \rightharpoonup  g(z,t)\ \  \text{in}\ \ {L^1((0,W), dz)}\ \ \text{as}\ \ n_k \to \infty.
\end{align}

 Let $ \phi \in \L^{\infty}(0,W)$ then from Lemma \ref{convergence lemma1}, we have for each $ s \in [0,t]$
\begin{align}\label{main convergence1}
\int_{0}^{W}\phi(z)[P^{n_k}(g^{n_k})(z,s)-P(g)(z,s)]dz \to 0 \ \ \text{as} \ \ n_k\to \infty.
\end{align}
In order to apply the dominated convergence theorem, the boundedness of the following integral is shown as
\begin{align}\label{exist domin}
\bigg|\int_{0}^{W} &\phi(z)[P^{n_k}(g^{n_k})(z,s)-P(g)(z,s)]dz\bigg|\nonumber\\ 
  \leq &  \|\phi\|_{L^{\infty}(0,W)}\int_{0}^{W}\biggl[ \frac{1}{2}\int_0^z K(z-z_1, z_1)|g^{n_k}(z-z_1,s)g^{n_k}(z_1,s)-g(z-z_1,s)g(z_1,s)|dz_1\nonumber\\
 &+\int_0^{n_k-z} K(z, z_1) |g^{n_k}(z,s)g^{n_k}(z_1,s)-g(z,s)g(z_1,s)|dz_1+ \int_{n_k-z}^{\infty} K(z, z_1)g(z,s)g(z_1,s)dz_1\nonumber\\
 &+ \int_{z}^{n_k}\int_{0}^{n_k-z_1}B(z|z_1;z_2)C(z_1,z_2)\left|g^{n_k}(z_1,s)g^{n_k}(z_2,s)-g(z_1,s)g(z_2,s)\right|dz_2dz_1\nonumber \\  &+\int_{n_k}^{\infty}\int_0^{\max(0, n_k -z_1)}B(z|z_1;z_2)C(z_1,z_2)g(z_1,s)g(z_2,s)dz_2dz_1\nonumber\\
  &+\int_{z}^{\infty}\int_{\max(0, n_k- z_1)}^{\infty}B(z|z_1;z_2)C(z_1,z_2)g(z_1,s)g(z_2,s)dz_2dz_1\nonumber\\
  &+\int_{0}^{n_k-z}C(z, z_1)\left|g^{n_k}(z,s)g^{n_k}(z_1,s)-g(z,s)g(z_1,s)\right|dz_1+\int_{n_k-z}^{\infty}C(z, z_1)g(z,s)g(z_1,s)dz_1 \biggr]dz\nonumber\\
 \leq &  \|\phi\|_{L^{\infty}(0,W)} [1/2k_1(3V(T)^2+5\|g\|^2_{L^1(\mathbb{R}_{+}, (1+z)dz)})\nonumber\\
 &+2k_2 [(3N+2) \|g\|^2_{L^1(\mathbb{R}_{+}, (1+z)dz)}+(N+1) V(T)^2]< \infty.
\end{align}
Since the left-hand side of (\ref{exist domin}) is in $L^{1}((0,W), dz)$, then from (\ref{main convergence1}), (\ref{exist domin}) and the Lebesgue dominated convergence theorem, we obtain
\begin{align}\label{converge1}
\int_0^t\int_{0}^{W}\phi(z)[P^{n_k}(g^{n_k})(z,s)-P(g)(z,s)]dzds \to 0 \ \ \text{as} \ \ k\to \infty.
\end{align}
Since $\phi$ is arbitrary and (\ref{converge1}) holds for $ \phi \in L^{\infty}(0, W)$ as $k \to \infty$, hence, by applying Fubini's theorem, we get
\begin{align}\label{exist last1}
\int_{0}^{t}P^{n_k}(g^{n_k})(z, s)ds\rightharpoonup  \int_{0}^{t}P(g)(z, s)ds\  \text{ in}\ L^{1}((0,W), dz),\ \text{ as}\ k  \to \infty,
\end{align}
Then by the definition of  $P^{n_k}$  we obtain
\begin{align}\label{exist last2}
g^{n_k}(z,t)= \int_{0}^{t} P^{n_k}(g^{n_k})(z, s)ds+ g^{n}_0(z),\ \ \ \text{for}\ t \in [0,T]
\end{align}
and thus, it follows from (\ref{exist last1}), (\ref{exist lim}) and (\ref{exist last2}) that
\begin{align*}
\int_{0}^{W}\phi(z)g(z,t)dz = \int_{0}^{t} \int_{0}^{W} \phi(z)P(g)(z, s)dzds+\int_{0}^{W}\phi(z)g_0(z)dz,
\end{align*}
for any $\phi \in L^{\infty}(0,W)$.  Hence for the arbitrariness of $T$, $W$, the uniqueness of limit and for all $\phi \in L^{\infty}(0,W)$, we have $g(z,t)$ is a solution to (\ref{cfe})--(\ref{in1}). This implies that for almost any $z \in (0,W)$, we have
\begin{align*}
g(z,t)= \int_{0}^{t}P(g)(z, s)ds+ g_0(z), \ \text{for a.e.}\ z\in(0,W).
\end{align*}

This completes the proof of the existence Theorem \ref{existmain theorem1}.
\end{proof}

 In the next section, the uniqueness of weak solutions to (\ref{cfe})--(\ref{in1}) is shown under additional restrictions on collisional and breakup kernels which is based on the integrability of higher moments. This uniqueness result is motivated from the pioneer works of the DaCosta \cite{DaCosta:1995}, Escobedo et al. \cite{Escobedo:2003},  Giri \cite{Giri:2013} and Giri et al. \cite{Giri:2011}.

\section{Uniqueness}
In this section, we establish the uniqueness of weak solutions to (\ref{cfe})--(\ref{in1}) by stating the following theorems.
\begin{thm}\label{uniquemain theorem1}
Suppose that $(H1)$--$(H6)$ and $(UH1)$ hold. Let $g$ be a solution to (\ref{cfe})--(\ref{in1}) with initial data $g_0 \in S^+$. Then the solution $g \in S^+ $ is unique.
\end{thm}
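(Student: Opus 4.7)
Let $g_1, g_2 \in S^+$ be two weak solutions of (\ref{cfe})--(\ref{in1}) sharing the initial datum $g_0$, and set $h = g_1 - g_2$, so that $h(\cdot,0) \equiv 0$. Using the bilinear identity
\begin{align*}
g_1(a)g_1(b) - g_2(a)g_2(b) = \tfrac{1}{2}\bigl[h(a)(g_1+g_2)(b) + (g_1+g_2)(a)h(b)\bigr],
\end{align*}
the function $h$ satisfies a linear integro-differential equation driven by $g_1 + g_2$. The plan is to close a Gronwall-type inequality on the weighted $L^1$-distance
\begin{align*}
\Delta(t) := \int_{0}^{\infty}(1+z)\,|h(z,t)|\,dz,
\end{align*}
starting from $\Delta(0)=0$.

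A preparatory step establishes a uniform higher-moment bound: for $T>0$ fixed, there exist $\sigma>1$ (depending on $\alpha,\beta,\omega$) and $C_\sigma(T)>0$ such that $\sup_{t\in[0,T]}\int_0^\infty z^\sigma g_i(z,t)\,dz\leq C_\sigma(T)$ for $i=1,2$. This is obtained by multiplying (\ref{cfe}) by $z^\sigma$ and combining (i) the elementary coagulation estimate $(z+z_1)^\sigma - z^\sigma - z_1^\sigma \leq C_\sigma (z\,z_1^{\sigma-1} + z^{\sigma-1}z_1)$ together with $(H3)$; (ii) the fragmentation convexity bound $\int_0^{z_1} z^\sigma B(z|z_1;z_2)\,dz \leq z_1^\sigma$, which follows from $z\leq z_1$ and (\ref{mass1}) for $\sigma\geq 1$ and makes the bare fragmentation flux non-positive; and (iii) the strong nonlinear fragmentation $(UH1)$, which upon integration in $z$ against $\int_0^{z_1} z B\,dz=z_1$ yields the pointwise lower bound $C(z_1,z_2)\geq (B_a/N)\,z_1^{1+\eta}z_2^{1+\eta}$ on $\{z_1\geq 1\}$. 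This lower bound supplies a strictly negative dissipation in the evolution of $M_\sigma$ that absorbs the polynomial growth produced by coagulation, and Gronwall closes the estimate.

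Next I differentiate $\Delta$, writing $\partial_t|h|=\operatorname{sgn}(h)\,\partial_t h$ (justified by a smooth approximation of $\operatorname{sgn}$ by $\zeta_\varepsilon$ followed by $\varepsilon\to 0$). For the coagulation contribution, Fubini and symmetrization combined with the cancellation $(1+z+z_1)-(1+z)-(1+z_1)=-1$ give
\begin{align*}
I_{\text{coag}}(t)\leq k_1\int_{0}^{\infty}\!\!\int_{0}^{\infty}(1+z)^{\omega}(1+z_1)^{\omega}|h(z)|\,(g_1+g_2)(z_1)\,dz\,dz_1,
\end{align*}
which is dominated by $C(T)\,\Delta(t)$ via $(H3)$ and the higher-moment bound. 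For the collision-fragmentation contribution, the identity $\int_{0}^{z_1}(1+z)B(z|z_1;z_2)\,dz = \zeta(z_1)+z_1 \leq N+z_1$, combined with the loss weight $(1+z_1)$, produces a net dissipative weight bounded by $N-1$, while the remaining piece is of the form $\int\!\!\int (1+z_2)C(z_1,z_2)(g_1+g_2)(z_2)|h(z_1)|\,dz_1\,dz_2$ and is controlled by $(H4)$ together with the moments $M_{1+\alpha}(g_i),M_{1+\beta}(g_i)$ from the preparatory step. In total $I_{\text{frag}}(t)\leq C(T)\,\Delta(t)$, and adding the two estimates gives $\Delta'(t)\leq C(T)\,\Delta(t)$; Gronwall with $\Delta(0)=0$ forces $\Delta\equiv 0$ on $[0,T]$, hence $g_1\equiv g_2$. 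The mass conservation of the unique solution then follows from multiplying (\ref{cfe}) by $z$ and integrating, the Fubini interchange on the fragmentation term being legitimized by the same higher-moment bound.

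The principal obstacle lies in the higher-moment step: extracting from the pointwise lower bound on $C$ implied by $(UH1)$ enough strict dissipation to absorb the polynomial growth of the coagulation flux, and pinning down a range of admissible $\sigma$ in terms of $\alpha,\beta,\omega$ which is compatible with the subsequent coagulation and collision-fragmentation estimates. Once this balance is secured, the differentiation of the sign function, the Fubini manipulations and the Gronwall closure follow the pattern developed by Escobedo \emph{et al.} and Giri with only routine modifications to accommodate the collision-induced multiple fragmentation geometry.
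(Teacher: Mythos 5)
Your overall architecture (a higher-moment estimate for the solutions, followed by a Gronwall inequality for the weighted $L^1$-distance $\int(1+z)|g_1-g_2|\,dz$ using the bilinear splitting and the cancellations $(1+z+z_1)-(1+z)-(1+z_1)=-1$ and $\int_0^{z_1}(1+z)B\,dz-(1+z_1)=\zeta(z_1)-1\le N-1$) is the same as the paper's; the paper's proof of Theorem \ref{uniquemain theorem1} carries out exactly this Gronwall step via the decomposition into the terms $S_1^n,\dots,S_{10}^n$. The difference, and the problem, lies entirely in your preparatory moment step.

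The paper does \emph{not} prove a uniform bound $\sup_{t\in[0,T]}M_\sigma(t)<\infty$; it proves the weaker, time-integrated statement $\int_0^t M_\sigma(s)\,ds<\infty$ (Theorem \ref{integrability higher} via Lemma \ref{lem2}), and it does so by testing the weak formulation against a \emph{sublinear} weight $z^\lambda$, $\lambda\in(0,1)$. For such weights the coagulation term is sign-favorable, the loss terms are finite by the induction hypothesis, and the identity (\ref{inte 1}) then \emph{forces} the fragmentation gain term $\int_0^t\!\int\!\int\!\int z^\lambda B\,C\,g\,g$ to be finite; the lower bound $(UH1)$ is then used on this gain term to deduce $\int_0^t M_{\lambda+\eta+1}(s)\,ds<\infty$. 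Your mechanism is genuinely different: you test with a superlinear weight $z^\sigma$, $\sigma>1$, and claim that fragmentation supplies a ``strictly negative dissipation'' absorbing the coagulation growth. This does not follow from the stated hypotheses. For $\sigma>1$ the only available estimate is $\int_0^{z_1}z^\sigma B(z|z_1;z_2)\,dz\le z_1^{\sigma-1}\int_0^{z_1}zB\,dz=z_1^\sigma$, i.e.\ the net fragmentation flux in the $M_\sigma$-balance is merely non-positive, with no quantitative coefficient: one can construct $B$ satisfying (\ref{N1})--(\ref{mass1}) with $\zeta\equiv 2$ for which $\int_0^{z_1}z^\sigma B\,dz$ is arbitrarily close to $z_1^\sigma$ (put almost all the mass in a fragment of size $z_1-\varepsilon$ plus one tiny fragment), so no dissipation of fixed size can be extracted. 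Worse, $(UH1)$ is a \emph{lower} bound on the gain kernel $B\,C$, which pushes $M_\sigma$ \emph{up}, not down; it cannot produce dissipation in the $M_\sigma$ evolution. Since $(H3)$ permits $\omega\in(1/2,1)$, i.e.\ coagulation kernels of gelation type, the quadratic growth term $M_{1+\omega}M_{\sigma-1+\omega}$ cannot be absorbed by Gronwall alone either, so your preparatory step fails as stated. To repair it you should replace the uniform bound by the paper's time-integrated bootstrap with sublinear test weights (the DaCosta-type argument), after which your Gronwall closure goes through with $\int_0^t M_{1+\beta}(s)\,ds$ and $\int_0^t M_{1+\omega}(s)\,ds$ in place of suprema.
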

In order to prove the Theorem \ref{uniquemain theorem1}, we need to show the integrability of higher moments $ M_{\sigma} $ of the concentration or number density $g$ i.e.
\begin{align*}
\int_0^t M_{\sigma}(s)ds < \infty \ \ \ \text{for all}\ \ t\in [0,T],\ \text{where}\ \ \sigma \in (1,2),
\end{align*}
which is shown in the next theorem.
\begin{thm}\label{integrability higher}
Suppose $(H1)$--$(H6)$ and $(UH1)$ hold. Let $g\in S^+$ be any solution to (\ref{cfe})--(\ref{in1}) on $[0,T],$ $ T>0$. Then for every $\epsilon >0$ and $1+\eta >0$, we have
\begin{align*}
\int_0^t M_{2+ \eta -\epsilon}(s)ds< \infty.
\end{align*}
\end{thm}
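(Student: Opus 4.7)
Fix $T>0$ and set $\sigma:=2+\eta-\epsilon$. If $\epsilon\ge 1+\eta$ then $\sigma\le 1$, and $M_\sigma(t)\le M_0(t)+M_1(t)\le V(T)$ by Lemma~\ref{compactness1}(i) already gives $\int_0^T M_\sigma(s)\,ds\le TV(T)<\infty$. Henceforth I would assume $\sigma\in (1,2+\eta)\subset(1,2)$. The approach is to work at the level of the truncated solutions $g^n$ from Subsection~\ref{subsec trunc1} (whose moments are all finite, and for which mass conservation (\ref{trunc mass1}) is available), derive a uniform-in-$n$ bound $\int_0^T M^n_\sigma(s)\,ds\le K(T)$, and pass to the limit using the weak convergence (\ref{equicontinuity f}) together with Fatou's lemma.

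Multiplying (\ref{trunc1}) by $z^\sigma$, integrating, and applying Fubini produces $\frac{d}{dt}M^n_\sigma(t)=J^{coag}_n(t)-J^{frag}_n(t)$, with
\[
J^{frag}_n(t)=\int_0^n\!\!\int_0^{n-z_1}\!\bigl[z_1^\sigma-\Phi_\sigma(z_1,z_2)\bigr]C_n(z_1,z_2)g^n(z_1,t)g^n(z_2,t)\,dz_2dz_1,
\]
where $\Phi_\sigma(z_1,z_2):=\int_0^{z_1}z^\sigma B(z|z_1;z_2)\,dz$. Using the mass-balance identity $z_1=\int_0^{z_1}zB\,dz$ from (\ref{mass1}) to substitute into $z_1^\sigma=z_1^{\sigma-1}z_1$, the integrand rewrites as $\int_0^{z_1}z(z_1^{\sigma-1}-z^{\sigma-1})B(z|z_1;z_2)\,dz$, which is nonnegative for $\sigma>1$. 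Invoking the strong nonlinear fragmentation hypothesis $(UH1)$ pointwise on $\{z_1\ge 1\}$ and computing the elementary integral,
\[
\bigl[z_1^\sigma-\Phi_\sigma(z_1,z_2)\bigr]C(z_1,z_2)\ge B_a z_1^\eta z_2^{1+\eta}\!\int_0^{z_1}\!\!z(z_1^{\sigma-1}-z^{\sigma-1})\,dz=\frac{(\sigma-1)B_a}{2(\sigma+1)}\,z_1^{\sigma+1+\eta}z_2^{1+\eta},
\]
and integration against $g^n(z_1,t)g^n(z_2,t)$ yields the key dissipation lower bound
\[
J^{frag}_n(t)\ge c_\sigma\,M^n_{1+\eta}(t)\!\int_1^n z_1^{\sigma+1+\eta}g^n(z_1,t)\,dz_1.
\]

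Since $z_1\ge 1$ and $1+\eta>0$ imply $z_1^{\sigma+1+\eta}\ge z_1^\sigma$, the high-moment factor dominates $M^n_\sigma(t)-V(T)$. The remaining factor $M^n_{1+\eta}(t)$ is controlled from below by a power of $M^n_\sigma(t)$ through a H\"older interpolation of $M^n_1$ (conserved by (\ref{trunc mass1})) between $M^n_{1+\eta}$ and $M^n_\sigma$, producing $M^n_{1+\eta}(t)\ge c(1+M^n_\sigma(t))^{-\gamma}$ with $\gamma=-\eta/(\sigma-1)>0$. On the coagulation side, $(H3)$ together with $(z+z_1)^\sigma-z^\sigma-z_1^\sigma\le C_\sigma(zz_1^{\sigma-1}+z^{\sigma-1}z_1)$, symmetry of $K$, and interpolation of the intermediate moments between $M^n_0, M^n_1$ and $M^n_\sigma$, give $J^{coag}_n(t)\le C(1+M^n_\sigma(t))^q$ with some $q<2$. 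Combining these produces a differential inequality
\[
\frac{d}{dt}M^n_\sigma(t)+c\bigl(M^n_\sigma(t)-V(T)\bigr)\bigl(1+M^n_\sigma(t)\bigr)^{-\gamma}\le C\bigl(1+M^n_\sigma(t)\bigr)^q,
\]
whose super-linear dissipation (when $2-\gamma>q$) allows a standard comparison argument to bound $\int_0^T M^n_\sigma(s)\,ds$ uniformly in $n$, and Fatou's lemma concludes.

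\textbf{Main obstacle.} The principal difficulty is extracting a usable lower bound on $M_{1+\eta}(t)$, which is not uniformly positive and must be recovered from the conserved first moment via H\"older interpolation, and then balancing the resulting inverse power of $M_\sigma$ against both the high moment $\int_1^\infty z^{\sigma+1+\eta}g\,dz$ and the coagulation growth. A secondary technicality is controlling the moment $M_{1+\omega}$ appearing in the coagulation bound when $\omega>1+\eta-\epsilon$, for which a Young-type absorption into the dissipation (or a bootstrap on $\sigma$) may be required.
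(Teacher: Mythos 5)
Your proposal is a genuinely different route from the paper's, but it has gaps that I do not think are repairable within the stated hypotheses. The paper does not run an ODE/dissipation argument at the level $\sigma\in(1,2)$ at all. Instead (Lemma~\ref{lem2}) it tests the weak formulation of an \emph{arbitrary} solution against the sublinear weight $z^{\lambda}$ with $\lambda\in(0,1)$, for which the coagulation gain is harmless (it is dominated by the coagulation loss via $(z+z_1)^{\lambda}\le z^{\lambda}+z_1^{\lambda}$, and the loss is finite by the induction hypothesis $\lambda+\omega\le\sigma$). Since every term on the "known" side of the integrated identity is finite, the fragmentation gain $\int_0^t\int\int\int z^{\lambda}BC\,gg$ must be finite uniformly in $n$; hypothesis $(UH1)$ then bounds it \emph{below} by $\frac{B_a}{\lambda+1}\inf_t M_{1+\eta}(t)\int_0^t\int_1^{\infty}z_1^{\lambda+\eta+1}g\,dz_1\,ds$, which yields $\int_0^tM_{\lambda+1+\eta}\,ds<\infty$; the theorem is then obtained by iterating the lemma. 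No pointwise-in-time comparison between dissipation and growth is ever needed.

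The concrete problems with your version are these. First, you prove the bound for the truncated solutions $g^n$ and pass to the limit, so at best you obtain the moment estimate for the particular solution constructed in Section~\ref{existexistence1}; the theorem asserts it for \emph{any} weak solution, and this generality is exactly what the uniqueness proof consumes (both $g$ and $h$ need integrable higher moments). Second, attacking $\sigma=2+\eta-\epsilon>1$ directly turns the coagulation gain into a genuine growth term: $(z+z_1)^{\sigma}-z^{\sigma}-z_1^{\sigma}\le C_\sigma(zz_1^{\sigma-1}+z^{\sigma-1}z_1)$ combined with $(H3)$ produces the factor $M_{1+\omega}$, and when $1+\omega>\sigma$ (permitted: $\omega$ may be close to $1$ while $\sigma=2+\eta-\epsilon$ is close to $1$ because $1+\eta=(\alpha+\beta)/2$ may be small) this moment is not bounded by any power of $M_\sigma$, nor can it be absorbed into the dissipation, which only reaches $M_{\sigma+1+\eta}$ and satisfies $\sigma+1+\eta<1+\omega$ whenever $\alpha+\beta<\omega+\epsilon$. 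So the claimed bound $J^{coag}_n\le C(1+M^n_\sigma)^{q}$, $q<2$, is false in general. Third, even where that bound holds, your closure condition $2-\gamma>q$ with $\gamma=-\eta/(\sigma-1)\approx-\eta/(1+\eta)$ is an additional restriction: for $\alpha+\beta$ small, $\gamma$ is large and the dissipation $\sim M_\sigma^{2-\gamma}$ is not even superlinear, so the comparison argument gives nothing, whereas $(UH1)$ only assumes $\alpha+\beta>0$. (A further, more minor, issue is that $C_n$ vanishes for $z_1+z_2>n$, so your dissipation lower bound $M^n_{1+\eta}(t)\int_1^nz_1^{\sigma+1+\eta}g^n\,dz_1$ is not uniform in $n$ near $z_1\approx n$.) The essential idea you are missing is the paper's: work with $\lambda<1$ so that coagulation never needs to be beaten, integrate in time, and read the higher moment off the \emph{finiteness} of the fragmentation gain rather than off a differential inequality.
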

\begin{proof}
This theorem can easily be proved by using repeated applications of the following lemma, see \cite{DaCosta:1995}.
\end{proof}
\begin{lem}\label{lem2}
Suppose  $(H1)$--$(H6)$ and $(UH1)$ hold. Let $g\in S^+$ be any solution to (\ref{cfe})--(\ref{in1}) on $[0,T],$ $ T>0$  and assume
\begin{align}\label{lemint}
\int_0^t M_{\sigma}(s)ds< \infty,\ \ \text{for all}\ t \in [0,T]\ \text{and for  some}\ \sigma \geq 1 \ \text{with}\ \sigma >\beta.
\end{align}
Then with $1+\eta > 0$ for all $t\in [0,T]$ and $\lambda \in (0, 1)$, we have
\begin{align*}
\int_0^t M_{\sigma +\eta -\beta +1}(s)ds< \infty,\ \ \text{if}\ \sigma -\beta < 1,\ \ \text{if}\ \ \lambda =\sigma -\beta.\\
\end{align*}
In case $ \lambda =1- \epsilon $, where $\epsilon >0$ in arbitrarily small. Then we get
\begin{align*}
\int_0^t M_{2+\eta -\epsilon }(s)ds< \infty.
\end{align*}
\end{lem}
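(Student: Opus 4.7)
Plan: I follow the DaCosta~\cite{DaCosta:1995} strategy (also used in \cite{Escobedo:2003,Giri:2013,Giri:2011}): test \eqref{cfe} against $\phi(z)=z^{r}$ for a carefully chosen $r\in(1,2)$, exploit the fragmentation-dominance condition $(UH1)$ to produce a damping term on a higher moment, and balance it against the coagulation growth. The parameter $r$ is dictated by the desired exponent: since the damping will be seen to act on $M_{r+1+\eta}$, I set $r-1=\lambda$ so that $r+1+\eta=1+\lambda+\eta=\sigma+1+\eta-\beta$, matching the two alternative prescriptions of $\lambda$ in the statement.

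Operationally, I would start from the truncated solutions $g^n$ of Section~\ref{subsec trunc1}, so that all moments exist and the manipulations are rigorous. Multiplying \eqref{trunc1} by $z^r$, integrating in $z$, and using Fubini together with the symmetry of $K$ and $C$ yields
\begin{align*}
\frac{dM_r^n}{dt}=\tfrac12\iint\bigl[(z+z_1)^r-z^r-z_1^r\bigr]K_n\,g^n g^n\,dz\,dz_1+\iint\Bigl[\int_0^{z_1}\! z^r B\,dz-z_1^r\Bigr]C_n\,g^n g^n\,dz_1\,dz_2.
\end{align*}
For $r>1$ the inner bracket in the second integral is non-positive, since $z^{r-1}\leq z_1^{r-1}$ on $(0,z_1)$ combined with \eqref{mass1} gives $\int_0^{z_1} z^r B\,dz\leq z_1^{r-1}\int_0^{z_1} z B\,dz=z_1^r$. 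The coagulation integrand is controlled, via the standard inequality $(z+z_1)^r-z^r-z_1^r\leq C_r(zz_1^{r-1}+z_1z^{r-1})$ for $r\in(1,2)$ and $(H3)$, by products of moments of order at most $\sigma$ and at most $1+\omega$, and is hence integrable in time by the hypothesis $\int_0^t M_\sigma(s)\,ds<\infty$, the mass conservation, and Lemma~\ref{compactness1}(i).

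To extract the damping, I integrate $(UH1)$ against $z\,dz$ on $(0,z_1)$ and apply \eqref{mass1}: for $z_1\geq 1$, $z_1 C(z_1,z_2)=\int_0^{z_1} zB\,C\,dz\geq (B_a/2)\,z_1^{2+\eta}z_2^{1+\eta}$, giving the auxiliary lower bound $C(z_1,z_2)\geq (B_a/2)\,z_1^{1+\eta}z_2^{1+\eta}$ for $z_1\geq 1$. Substituting this into the loss term $\iint z^r C g g\,dz\,dz_1$ restricted to $z\geq 1$ produces a negative contribution of order $-\kappa\,M_{1+\eta}(t)\,M_{r+1+\eta}(t)$. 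Integrating in time, discarding $M_r^n(t)\geq 0$, and passing to the limit $n\to\infty$ via the weak convergence and moment bounds from Section~\ref{subs:wk} gives
$$\kappa\int_0^t M_{1+\eta}(s)\,M_{r+1+\eta}(s)\,ds\leq M_r(0)+\textrm{(finite coagulation remainder)}<\infty,$$
which in turn yields $\int_0^t M_{r+1+\eta}(s)\,ds<\infty$, i.e.\ the claimed conclusion with exponent $\sigma+\eta-\beta+1$.

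The principal obstacle is the bookkeeping of exponents: one must ensure the coagulation gain (which after the elementary inequality produces the moment products $M_{1+\omega}M_{r-1+\omega}+M_\omega M_{r+\omega}$) is controlled in $L^1_t$ solely by the single hypothesis $\int_0^t M_\sigma\,ds<\infty$, which is tight when $\sigma-\beta$ is close to $1$ and is the reason for the alternative prescription $\lambda=1-\epsilon$ in the statement. The secondary difficulty — converting the bilinear bound $\int_0^t M_{1+\eta}M_{r+1+\eta}\,ds$ into a bound on $\int_0^t M_{r+1+\eta}\,ds$ — is handled by a pointwise positive lower bound on $M_{1+\eta}$ derived from the non-triviality of $g_0$ and conservation of $M_1$ via the split $M_1(t)=\int_{z\leq K}zg\,dz+\int_{z>K}zg\,dz$ combined with $z^{1+\eta}\geq K^{\eta}z$ on $\{z\leq K\}$, for $K$ chosen so the tail is at most $M_1(0)/2$.
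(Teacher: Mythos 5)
Your strategy is the superlinear-moment/damping route of DaCosta, which is genuinely different from the paper's: the paper tests against a \emph{sublinear} weight $z^{\lambda}$, $\lambda\in(0,1)$, notes that the resulting identity forces the non-negative fragmentation \emph{gain} term $\int_0^t\int\int\int z^{\lambda}B\,C\,g\,g$ to be finite because all loss terms are controlled by $\int_0^t M_{\sigma}\,ds<\infty$ (using $\lambda+\beta\le\sigma$ and $\lambda+\omega\le\sigma$), and then reads off $\int_0^t M_{\lambda+\eta+1}(s)\,ds<\infty$ directly from the lower bound $(UH1)$ after integrating $z^{\lambda}$ over $(0,z_1)$. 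No negativity of the fragmentation operator is needed there.

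The gap in your version is the damping extraction. Having combined gain and loss into the bracket $\int_0^{z_1}z^{r}B\,dz-z_1^{r}$, you establish only that it is $\le 0$; its magnitude is the \emph{difference} between gain and loss, not the loss itself. Substituting $C(z_1,z_2)\ge (B_a/2)z_1^{1+\eta}z_2^{1+\eta}$ into the loss term alone says nothing about the bracket, since the gain $\bigl(\int_0^{z_1}z^{r}B\,dz\bigr)C$ can be as large as $z_1^{r}C$ up to an uncontrolled error; hence the claimed $-\kappa M_{1+\eta}M_{r+1+\eta}$ does not follow, and you cannot absorb the gain into a ``finite remainder'' because it is of the same order as the loss. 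The repair is to use $(UH1)$ \emph{pointwise in $z$} inside the bracket: $\bigl[z_1^{r}-\int_0^{z_1}z^{r}B\,dz\bigr]C=\int_0^{z_1}(z_1^{r-1}-z^{r-1})\,z\,B\,C\,dz\ge B_a z_1^{\eta}z_2^{1+\eta}\int_0^{z_1}(z_1^{r-1}-z^{r-1})z\,dz=\tfrac{(r-1)B_a}{2(r+1)}z_1^{r+1+\eta}z_2^{1+\eta}$ for $z_1\ge 1$. Even with that fix, two further problems remain: the exponent bookkeeping is off ($r+1+\eta=2+\lambda+\eta$, not $1+\lambda+\eta$ as you write); and, more seriously, your coagulation bound produces $M_{1+\omega}$ (from $z\,z_1^{r-1}K\le k_1 z(1+z)^{\omega}z_1^{r-1}(1+z_1)^{\omega}$), which is \emph{not} controlled by $\int_0^t M_{\sigma}\,ds<\infty$ at the first step of the bootstrap, where $\sigma=1<1+\omega$ for $\omega>0$ --- the sublinear weight in the paper's proof is chosen precisely so that only moments of order $\le\lambda+\omega\le\sigma$ appear. (Your closing step also presupposes $M_1(t)=M_1(0)$ and a $t$-uniform tail bound to obtain $\inf_t M_{1+\eta}>0$, neither of which is available before mass conservation is proved; the paper's own use of $\inf_t M_{1+\eta}$ shares this weakness.)
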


\begin{proof}
Let us take some $\lambda \in (0, 1)$. Now, multiplying the weight $z^{\lambda}$ with weak formulation of (\ref{cfe})--(\ref{in1}) given in Definition \ref{def1}, then integrating with respect to $z$ from $0$ to $n$ and applying Fubini's theorem, we obtain

\begin{align}\label{inte 1}
\int_0^n z^{\lambda} [g(z,t)-g_0(z)]&dz+\int_0^t \int_0^n  \int_0^{\infty}z^{\lambda}K(z, z_1)g(z,s)g(z_1,s)dz_1dzds\nonumber\\
&+\int_0^t \int_0^n  \int_0^{\infty}z^{\lambda}C(z, z_1)g(z,s)g(z_1,s)dz_1dzds\nonumber\\
&=\frac{1}{2}\int_0^t \int_0^n  \int_0^{z}z^{\lambda}K(z-z_1, z_1)g(z-z_1,s)g(z_1,s)dz_1dzds\nonumber\\
&+\int_0^t \int_0^n \int_z^{\infty}\int_0^{\infty}z^{\lambda} B(z|z_1;z_2)C(z_1,z_2)g(z_1,s)g(z_2,s)dz_2dz_1dzds.
\end{align}
The last integral on the left-hand side on (\ref{inte 1}) is estimated, by applying Fubini's theorem and $(H4)$, as
\begin{align}\label{uni2 bo1}
\int_0^t \int_0^n &  \int_0^{\infty}z^{\lambda}C(z, z_1)g(z,s)g(z_1,s)dz_1dzds\nonumber\\
&\leq 2k_2\|g\|_{L^1(\mathbb{R}_{+},(1+z) dz)} \int_0^t\biggl[ \int_0^1 z^{\lambda}(1+z)^{\beta}g(z,s)dz+\int_1^n z^{\lambda}(1+z)^{\beta}g(z,s)dz\biggr]ds.
\end{align}
In case $\lambda + \beta \leq \sigma $ and from assumption (\ref{lemint}), we have
\begin{align}\label{uni2 bo10}
\int_0^t \int_0^n   \int_0^{\infty} & z^{\lambda}C(z, z_1)g(z,s)g(z_1,s)dz_1dzds \nonumber\\
& \leq 2^{\beta +1}k_2\|g\|_{L^1(\mathbb{R}_{+},(1+z) dz)} \int_0^t[  M_{\lambda}(s)+ M_{\lambda+\beta}(s)]ds< \infty.
\end{align}
Similarly, by using $(H3)$, and $\lambda + \omega \leq \sigma $ and (\ref{lemint}), gives
\begin{align}\label{uni2 bo11}
\int_0^t \int_0^n \int_0^{\infty} z^{\lambda} K(z, z_1) & g(z,s)g(z_1,s)dz_1dzds\nonumber\\
 \leq & 2^{\omega +1}k_1\|g\|_{L^1(\mathbb{R}_{+},(1+z) dz)} \int_0^t[  M_{\lambda}(s)+ M_{\lambda+\omega}(s)]ds < \infty.
\end{align}

From (\ref{uni2 bo10}) and (\ref{uni2 bo11}), the finiteness of the left-hand side of (\ref{inte 1}) is cleared. Therefore, the right-hand side of (\ref{inte 1}) is bounded uniformly with respect to $n$, which guarantees
\begin{align*}
\int_0^t\int_0^{\infty}\int_0^{z_1}\int_0^{\infty}z^{\lambda} B(z|z_1;z_2)C(z_1,z_2)g(z_1,s)g(z_2,s)dz_2dzdz_1ds< \infty.
\end{align*}
Next, for any $z_1 \geq 1,$ let us consider above integral by using $(UH1)$ as
\begin{align}\label{intlem 21}
\int_0^t\int_0^{\infty}&\int_0^{z_1}\int_0^{\infty}z^{\lambda}B(z|z_1;z_2)C(z_1,z_2)g(z_1,s)g(z_2,s)dz_2dzdz_1ds \nonumber\\
 & \geq B_a \int_0^t\int_1^{\infty}\int_0^{z_1}\int_0^{\infty}z^{\lambda} {z_1}^{\eta}{z_2}^{1+\eta}g(z_1,s)g(z_2,s)dz_2dzdz_1ds \nonumber\\
   &\geq B_a  \int_0^t \int_1^{\infty} \int_0^{\infty} \frac{{z_1}^{\lambda +\eta +1}}{\lambda  +1} {z_2}^{1+\eta} g(z_1,s)g(z_2,s)dz_2dz_1ds\nonumber\\
   & \geq \frac{ B_a \inf_{t\in [0, T]}M_{1+\eta}(t) }{\lambda+1}  \int_0^t  \int_1^{\infty} {z_1}^{\lambda + \eta +1}g(z_1,s)dz_1 ds,
\end{align}
where $0<1+\eta=\frac{\alpha+\beta}{2}<1$ (from $(UH1)$ and $(H5)$). From (\ref{intlem 21}), it can easily be shown that
\begin{align}\label{intlem 2}
 \int_0^t M_{\lambda + \eta +1}(s)ds< \infty.
\end{align}

Then, two cases arise.\\
Case 1: For $\sigma -\beta < 1 $, if $\lambda = \sigma -\beta $, then from (\ref{intlem 2}), we get
\begin{align*}
 \int_0^t M_{\sigma -\beta + \eta +1}(s)ds< \infty.
\end{align*}
Otherwise the condition $\lambda < 1$, is more restrictive, i.e. we may take $\lambda =1-\epsilon$ for any $\epsilon >0.$ This gives
\begin{align*}
 \int_0^t M_{2+ \eta -\epsilon}(s)ds< \infty.
\end{align*}
This completes the proof of Lemma \ref{lem2}.
\end{proof}


\begin{proof}
\textit {of the Theorem \ref{uniquemain theorem1}:}
Let $g$ and $h$ be two weak solutions to (\ref{cfe})--(\ref{in1}) on $[0,T]$, where $T>0 $, with $ g_0=h_0$. Set $ Z:=g-h$. For $n= 1,2,3\cdots,$ we define
\begin{align}\label{uni2 1}
u^{n}(t):= \int_{0}^{n}(1+z) |Z(z,t)|dz.
\end{align}
Multiplying $|Z|$ by $(1+z)$ and using Definition \ref{def1} $(iii)$,  we get
\begin{align}\label{uni2 2}
u^{n}(t)
=\int_{0}^{n}(1+z) \mbox{sgn}(Z(z,t)) [g(z,t)-h(z,t)]dz.
\end{align}

\begin{align*}
g(z,s)g(z_1,s)-h(z,s)h(z_1,s)= g(z,s)Z(z_1,s)+h(z_1,s)Z(z,s),
\end{align*}
 we have
\begin{align}\label{uni2 4}
u^{n}(t)=& \int_{0}^{t}\int_{0}^{n}\int_{0}^{n-z} \bigg[ \frac{1}{2}(1+z+z_1) \mbox{sgn}(Z(z+z_1,s)) -(1+z)\mbox{sgn}(Z(z,s)) \bigg]\nonumber\\
 & \times K(z, z_1)[g(z,s)Z(z_1,s)+h(z_1,s)Z(z,s)]dz_1dzds\nonumber\\
& -\int_{0}^{t}\int_{0}^{n}\int_{n-z}^{\infty}(1+z) \mbox{sgn}(Z(z,s))K(z, z_1)[g(z,s)Z(z_1,s)+h(z_1,s)Z(z,s)]dz_1dzds\nonumber\\
&+ \int_{0}^{t}\int_{0}^{n}\int_{0}^{z_1}\int_{0}^{\infty}(1+z) \mbox{sgn}(Z(z,s))B(z|z_1;z_2)C(z_1,z_2)\nonumber\\
&\hspace{2cm}\times [g(z_1,s)Z(z_2, s)+h(z_2, s)Z(z_1, s)]dz_2dzdz_1ds\nonumber\\
&+ \int_{0}^{t}\int_{n}^{\infty}\int_{0}^{n}\int_{0}^{\infty}(1+z) \mbox{sgn}(Z(z,s))B(z|z_1;z_2)C(z_1,z_2)\nonumber\\
&\hspace{2cm}\times [g(z_1,s)Z(z_2, s)+h(z_2, s)Z(z_1,s))]dz_2dzdz_1ds\nonumber\\
& -\int_{0}^{t}\int_{0}^{n}\int_{0}^{\infty}(1+z) \mbox{sgn}(Z(z,s))C(z, z_1)[g(z,s)Z(z_1,s)+h(z_1,s)Z(z,s)]dz_1dzds.
\end{align}
Let us define $p$ by
\begin{align*}
p(z,z_1,t):=(1+z+z_1) \mbox{sgn}(Z(z+z_1,s)) -(1+z)\mbox{sgn}(Z(z,s))-(1+z_1)\mbox{sgn}(Z(z_1,s)).
\end{align*}

By using (\ref{N1}), (\ref{mass1}), properties of signum function and Fubini's theorem, (\ref{uni2 4}) can be written as
\begin{align}\label{uni2 5}
u^{n}(t) \leq & \int_0^t \bigg[\frac{1}{2}\int_{0}^{t}\int_{0}^{n}\int_{0}^{n-z} p(z,z_1,s) K(z, z_1)g(z,s)Z(z_1,s)dz_1dz\nonumber\\
 &+\frac{1}{2} \int_{0}^{n}\int_{0}^{n-z} p(z,z_1,s) K(z, z_1)h(z_1,s)Z(z,s)dz_1dz\nonumber\\
&+ (1+N) \int_{0}^{n}\int_{0}^{\infty}|Z(z_1,s)|C(z, z_1)g(z,s)dz_1dz\nonumber\\
&+ 2\int_{0}^{n}\int_{0}^{\infty}z|Z(z_1,s)|C(z, z_1)g(z,s)dz_1dz + N \int_{0}^{n}\int_{0}^{\infty}|Z(z,s)| C(z, z_1)h(z_1,s)dz_1dz\nonumber\\
&+ \int_{0}^{n}\int_{0}^{\infty}z|Z(z,s)|C(z, z_1)h(z_1,s)dz_1dz -\int_{0}^{n}\int_{0}^{\infty}z|Z(z,s)|C(z, z_1)h(z_1,s)dz_1dz\nonumber\\ &+\int_{n}^{\infty}\int_{0}^{n}\int_{0}^{\infty}(1+z)|Z(z_2, s)| B(z|z_1;z_2)C(z_1,z_2)g(z_1,s)dz_2dzdz_1\nonumber\\
&+ \int_{n}^{\infty}\int_{0}^{n}\int_{0}^{\infty}(1+z)|Z(z_1,s))| B(z|z_1;z_2)C(z_1,z_2)h(z_2, s)dz_2dzdz_1\nonumber\\
& +\int_{0}^{n}\int_{n-z}^{\infty}(1+z)|Z(z_1,s)| K(z, z_1)g(z,s)dz_1dz \bigg] ds:= \sum_{i=1}^{10} S_{i}^{n}(t),
\end{align}
where $S_{i}^{n}(t)$, for $i=1,2,\cdots, 10,$ are the corresponding integrals in preceding lines. 
We estimate $S_1^n(t)$ and $S_2^n(t)$ as given in Giri \cite{Giri:2013}, which are
 \begin{align*}
S_1^n(t)\leq \int_0^t \Psi_{g_1}(s)u^n(s)ds,
\end{align*}
 where $\Psi_{g_1}(s):= 2^{1+\omega}k_1[M_0(g(s))+M_{1+\omega}(g(s))]$ and
  \begin{align*}
S_2^n(t)\leq \int_0^t \Psi_{g_2}(s)u^n(s)ds,
\end{align*}
 where $\Psi_{g_2}(s):= 2^{1+\omega}k_1[M_0(h(s))+M_{1+\omega}(h(s))]$.
Next, estimate $S_3^n(t)$ is evaluated, by using the integrability of higher moments and $(H4)$, as
\begin{align*}
S_3^n(t) 
\leq &  2k_2(1+N)\int_{0}^{t}\int_{0}^{n}\int_{0}^{n}(1+z)(1+z_1)  |Z(z_1,s)|g(z,s)dz_1dzds\\
& + 2k_2(1+N)\int_{0}^{t}\int_{0}^{n}\int_{n}^{\infty} (1+z)(1+z_1)  |Z(z_1,s)|g(z,s)dz_1dzds\\
\leq &  2k_2(1+N)  \|g\|_{L^1(\mathbb{R}_{+},(1+z)dz)} \int_{0}^{t} u^n(s)  ds\nonumber\\
& +2k_2(1+N)\int_{0}^{t}\int_{0}^{n}\int_{n}^{\infty}|Z(z_1,s)|(1+z)(1+z_1)g(z,s)dz_1dzds.
\end{align*}
Further, the finiteness of the second term on the right-hand side in above inequality is shown as \begin{align*}
 2k_2(1+N)\int_{0}^{t}&\int_{0}^{n}\int_{n}^{\infty}|Z(z_1,s)|(1+z)(1+z_1)g(z,s)dz_1dzds\nonumber\\
\leq & 2k_2(1+N) \|g\|_{L^1(\mathbb{R}_{+},(1+z)dz)}( \|g\|_{L^1(\mathbb{R}_{+},(1+z)dz)}+ \|h\|_{L^1(\mathbb{R}_{+},(1+z)dz)})T.
\end{align*}
Then as $n\to \infty$, the above term goes to $0$. Hence, $S_3^n(t)$ can be rewritten as
\begin{align*}
S_3^n(t) \leq  2k_2(1+N) \|g\|_{L^1(\mathbb{R}_{+},(1+z)dz)} \lim_{n \to \infty}\int_{0}^{t}  u^n(s)ds.
\end{align*}
From the integrability of higher moments and $(UH1)$, let us now estimate $S_4^n(t)$ as
 \begin{align*}
S_4^n(t) 
 \leq & 2k_2\int_{0}^{t} (M_{1+\alpha}(s)+M_{1+\beta}(s))u^n(s)ds\\
 & +2k_2\int_{0}^{t}\int_{0}^{n}\int_{n}^{\infty}z|Z(z_1,s)|(z^{\alpha}+z^{\beta})(1+z_1) g(z,s)dz_1dzds.
\end{align*}
The second integral on the right-hand side in above inequality can be further simplified as
\begin{align*}
2k_2&\int_{0}^{t}\int_{0}^{n}\int_{n}^{\infty}z|Z(z_1,s)|(z^{\alpha}+z^{\beta})(1+z_1) g(z,s)dz_1dzds\nonumber\\
&\leq 2k_2 (\|g\|_{L^1(\mathbb{R}_{+},(1+z)dz)}+\|h\|_{L^1(\mathbb{R}_{+},(1+z)dz)}) \int_{0}^{t} (M_{1+\alpha}(s)+M_{1+\beta}(s)) ds < \infty.
\end{align*}
As $n\to \infty$, the above term goes to $0$. Finally, $S_4^n(t)$ can be written as
\begin{align*}
S_4^n(t) \leq  2k_2 \lim_{n \to \infty} \int_{0}^{t} (M_{1+\alpha}(s)+M_{1+\beta}(s))u^n(s)ds.
\end{align*}
Moreover, $S_5^n(t)$ can also be estimated, by using the integrability of higher moments and $(UH1)$, as
\begin{align*}
S_5^n(t) 
\leq & 2Nk_2\int_{0}^{t}\int_{0}^{n}\int_{0}^{\infty}|Z(z,s)|(1+z)(1+z_1)h(z_1,s)dz_1dzds\leq 2Nk_2\|h\|_{L^1(\mathbb{R}_{+},(1+z)dz)} \int_{0}^{t} u^n(s)ds.
\end{align*}
Next, we show the finiteness of $S_6^n(t)$ as \begin{align*}
S_6^n(t)
\leq &k_2\int_{0}^{t}\int_{0}^{n}\int_{0}^{\infty}z[g(z,s)+h(z,s)](z^{\alpha}+z^{\beta})(1+z_1)h(z_1,s)dz_1dzds\\
\leq &k_2\|g\|_{L^1(\mathbb{R}_{+},(1+z)dz)}\int_{0}^{t} (M_{1+\alpha}(g(s))+M_{1+\beta}(g(s)))(M_{1+\alpha}(h(s))+M_{1+\beta}(h(s)))ds<\infty.
\end{align*}
Since both $S_6^n(t)$ and $S_7^n(t)$ are same and finite, hence, $S_6^n(t)-S_7^n(t)$ goes to $0$, as $n\to \infty$. Now, the boundedness of $S_8^n(t)$ is shown as
\begin{align*}
S_8^n(t)
\leq &2Nk_2\int_{0}^{t}\int_{n}^{\infty}\int_{0}^{\infty}[{z_1}^{\beta}+{z_1}^{1+\beta}](1+z)^{\beta}(g(z_2,s)+h(z,s))g(z_1,s)dz_2dz_1ds\nonumber\\
\leq &2Nk_2 (\|g\|_{L^1(\mathbb{R}_{+},(1+z)dz)}+\|h\|_{L^1(\mathbb{R}_{+},(1+z)dz)}) \int_{0}^{t}[M_{\beta}(s)+M_{1+\beta}(s) ] ds< \infty.
\end{align*}
 Similarly, the finiteness of $S_9^n(t)$ and $S_{10}^n(t)$ can be shown by using integrability of higher moments and $(UH1)$. Hence, $S_8^n(t)+ S_9^n(t) +S_{10}^n(t)\to 0$ as $n  \to \infty$.

Substituting the estimates of all $S_i^n(t)$, for $i=1,2,\cdots$, 10  into (\ref{uni2 5}), we finally obtain \begin{align*}
\int_0^{\infty}(1+z)|Z(z,t)|dz=&u(t)= \lim_{n \to \infty} u^n(t)\leq  \lim_{n\to\infty}  \sum_{i=1}^{10} S_{i}^{n}(t)
\leq  \int_{0}^{t}\Psi (s)u(s)ds,
\end{align*}
where $\Psi (s):= [  \sup_{s \in [0,t]}\Psi_{g_1}(s)+ \sup_{s \in [0,t]}\Psi_{g_2}(s) +2(1+N)k_2[\|g\|_{L^1(\mathbb{R}_{+},(1+z)dz)}\\
+  \sup_{s \in [0,t]}M_{1+\alpha}(s)+ \sup_{s \in [0,t]}M_{1+\beta}(s)+2Nk_2\|h\|_{L^1(\mathbb{R}_{+},(1+z)dz)}  \geq 0$.
An application of Gronwall's inequality implies \begin{align*}
u(t)=0\ \ \ \ \text{for all}\ t \in [0, T].
\end{align*}
Therefore, $g(z,t)=h(z,t)$ for a.e., $z \in \mathbb{R}_{+}$.
This confirms the uniqueness of weak solution to (\ref{cfe})--(\ref{in1}).
\end{proof}
At last, by multiplying $z$ in (\ref{cfe}), then taking integration from $0$ to $\infty $ with respect to $z$ and finally using Fubini's theorem,
hypotheses \ref{hyp1} and the integrability of higher moments from Theorem \ref{integrability higher}, it can easily be shown that the unique solution to (\ref{cfe})--(\ref{in1}) is mass conserving, i.e.
\begin{align*}
\frac{d M_1}{dt}=0. \end{align*}

\subsection*{Acknowledgment}
The first author, PKB would like to thank University Grant Commission (UGC), $6405/11/44$, India, for assisting Ph.D fellowship and the second author, AKG wish to thank Science and Engineering Research Board (SERB), Department of Science and Technology (DST), India for their funding support through the project $YSS/2015/001306$ for completing this work.

\end{document}